\documentclass[lettersize,journal]{IEEEtran}
\usepackage{amsmath,amsfonts}
\usepackage{algorithmic}
\usepackage{algorithm}
\usepackage{array}
\usepackage[caption=false,font=normalsize,labelfont=sf,textfont=sf]{subfig}
\usepackage{textcomp}
\usepackage{stfloats}
\usepackage{url}
\usepackage{verbatim}
\usepackage{graphicx}
\usepackage{cite}
\usepackage{hyperref}
\usepackage{amsthm}
\usepackage{fancyhdr}
\usepackage{enumerate}
\usepackage{tabularx}

 \usepackage{booktabs}
 \usepackage{balance}
\usepackage{float} 

\newtheorem{theorem}{Theorem}
\newtheorem{proposition}{Proposition}

\newcounter{remark}
\newenvironment{remark}
{\refstepcounter{remark}\par\medskip\noindent\textbf{Remark \theremark.}\ }
{\par\medskip}

\newcommand{\secref}[1]{Section~\ref{#1}}
\newcommand{\figref}[1]{Fig.~\ref{#1}}

\newcommand{\thrmref}[1]{Theorem~\ref{#1}}

\newcommand{\propref}[1]{Proposition~\ref{#1}}

\begin{document}
\title{Regret-weighted Bayes Fusion for Distributed Experimental Design}
\author{Nagananda K G, Lav R. Varshney,~\IEEEmembership{Senior Member,~IEEE,} Pramod K. Varshney,~\IEEEmembership{Life Fellow,~IEEE}  
\thanks{Nagananda K G is with Fariborz Maseeh Department of Mathematics and Statistics, Portland State University, Portland, OR 97201, USA. (email: \texttt{nanda@pdx.edu}).}
\thanks{Lav R. Varshney is with the AI Innovation Institute, Stony Brook University, Stony Brook, NY 11794, USA, and with Brookhaven National Laboratory, Upton, NY 11973, USA. (email: \texttt{lav.varshney@stonybrook.edu}).}
\thanks{Pramod K. Varshney is with the Department of Electrical Engineering and Computer Science, Syracuse University, Syracuse, NY 13244 USA. (email: \texttt{varshney@syr.edu}).}
}



\maketitle

\begin{abstract}
We study distributed experimental design with multiple candidate experiments, where local sites possess only partial information and transmit design recommendations to a fusion center. Unlike centralized design, in which the experiment that maximizes expected information gain can be selected directly, distributed design requires combining heterogeneous and potentially conflicting local recommendations. Formulating as a multi-class Bayes fusion problem, centralized oracle design is treated as an unknown label and each site is characterized by a local recommendation mechanism.  The proposed fusion rule minimizes posterior expected information regret, rather than merely maximizing the number of local votes or the posterior probability (MAP) of the oracle label.  This distinction is essential because different incorrect experimental choices may incur different losses in information gain.  We show that majority vote is optimal only under restrictive symmetry assumptions and can otherwise be strictly suboptimal. Regret-weighted multi-class Chernoff bounds are derived to identify the pairwise separations governing distributed design performance.  Numerical studies identify two operational regimes: MAP is effective when oracle-label accuracy and information regret are aligned, while regret-weighted Bayes fusion reduces information loss when the most probable oracle label is not the lowest-regret decision. 
\end{abstract}
\begin{IEEEkeywords}
distributed Bayesian experimental design; information regret; local recommendation; regret-weighted Bayes fusion; multi-class decision problem. 
\end{IEEEkeywords}

\section{Introduction}
Distributed experimental design arises when the information needed to select an experiment is dispersed across several sites, agents, sensors, laboratories, clinics, or data repositories \cite{Hamburg2021,Alexander2023,McMahan2017,Tamang2026,Roy2026}. In a centralized setting, one evaluates the available candidate experiments using a global utility criterion, such as expected information gain (EIG) \cite{Chaloner1995}, and selects the experiment that maximizes the criterion. In a distributed setting, however, no single site has access to the full information needed to evaluate the global utility.  Each site observes only its own local data structure, population characteristics, measurement constraints, cost profile, or scientific objective; see, e.g., \cite{Meinert1986,Jong2022,Li2023,Slautin2024,Lobel2025,Slautin2025,Kijewski2026}.  Consequently, each site may form a local experiment recommendation that is informative, but incomplete from a global perspective.  We focus on the case in which the fusion center must select a common experiment to be implemented across sites, as occurs when coordinated resource allocation preclude site-specific experimental choices; see, e.g., \cite{Burman2001}.  The central question is then how to combine these local recommendations into a single global experimental choice that approximates the decision that would have been made under centralized access to all information. 

A simple approach is to treat the problem as one of voting \cite{May1952,Fishburn1973,Black1987,AustenSmith1996}.  Each site recommends one experiment and the fusion center selects the experiment receiving the most recommendations. Although this approach is easy to implement and appealing in symmetric settings, it ignores several important features of distributed experimental design. Sites may have different levels of reliability. Some sites may be highly informative about the globally optimal design, while others may be only weakly informative. Moreover, different errors may have different consequences. Selecting an experiment that is nearly as informative as the global optimum should not be penalized in the same way as selecting an experiment that is substantially less informative. Thus, the distributed experimental design problem is not merely an aggregation problem; it is a decision problem under uncertainty, where local recommendations must be interpreted in light of site-specific reliability and the scientific cost of choosing a suboptimal experiment.

In our earlier binary formulation, the distributed design problem involved two candidate experiments \cite{Nagananda2026c}. Each local site compared the two candidates and sent a binary recommendation to the fusion center. The fusion problem then reduced to deciding which of the two experiments was more likely to coincide with the centralized oracle choice, with EIG as the utility.  This led to a Bayes-optimal fusion rule with a threshold-based likelihood-ratio structure.  Each local recommendation contributed a piece of evidence in favor of one experiment or the other, and the fusion center compared the accumulated evidence against a loss-adjusted decision threshold.  The binary setup \cite{Nagananda2026c} showed that majority voting is generally not optimal: the optimal decision must account for prior plausibility, local reliability, and asymmetric losses.   

The present paper extends the binary case to the setting in which there are multiple candidate experiments.  When there are only two experiments, the fusion problem can be expressed as a threshold comparison between two posterior risks.  With multiple candidate experiments, the problem becomes a multi-class Bayes decision problem \cite{DeGroot2004,Bishop2006,Hastie2009}.  The fusion center must compare several possible global designs simultaneously, and the consequences of errors are no longer summarized by a single pair of error costs \cite{Kittler1998,Ruta2000,Kittler2003}.  Instead, every possible selected experiment must be compared against every possible oracle experiment. This creates a full loss structure over pairs of candidate designs and requires a multi-class fusion rule.

The main conceptual contribution of this paper is to formulate distributed experimental design with multiple candidate experiments as a regret-weighted Bayes fusion problem. The centralized oracle is defined as the experiment that would have been selected if all information were centrally available. Each local site produces a recommendation based on its own information.  After receiving the local recommendations, the fusion center evaluates how likely each centralized oracle choice is, given the local recommendations, and how much information would be lost by selecting each candidate experiment. The resulting rule selects the experiment with minimum posterior expected information regret.  This criterion directly reflects the purpose of experimental design: the goal is not merely to identify the oracle label correctly, but to preserve as much centralized information gain as possible.  

This regret-based viewpoint is important because of the following reason.  In ordinary classification, all incorrect labels are often treated equally, at least under the standard zero-one loss.  In experimental design, however, the candidate experiments may be ordered or structured by their information values.  Two experiments may have very similar EIGs, while a third may be substantially inferior.  A fusion rule that treats all errors equally may, therefore, make decisions that are statistically defensible as classifications but scientifically inefficient as designs.  By using information regret as the loss, the proposed framework distinguishes between harmless and consequential errors.  For example, this is especially relevant in biomedical applications and multi-institutional settings, where experimental choices can have substantial downstream effects on cost, sample size, measurement burden, and scientific precision \cite{Meinert2012,Ram2012,Piantadosi2017}.

The multiple-experiment setting also clarifies why majority vote is generally inadequate. Majority vote implicitly assumes that all sites have equal reliability, all local errors have the same structure, all candidate experiments are equally plausible a priori, and all wrong choices are equally costly. These assumptions are rarely appropriate in distributed experimental design. Some sites may have data distributions that are more representative of the centralized objective. Some may be better able to distinguish particular pairs of candidate experiments. Others may be informative for one design comparison but nearly uninformative for another. The proposed Bayes fusion rule accommodates these differences through site-specific recommendation behavior and a regret-based loss structure. As a result, a recommendation from a highly informative site may rationally outweigh several recommendations from weakly informative sites.

Beyond the binary case \cite{Nagananda2026c}, the multiple-experiment setup reveals that fusion performance is governed by pairwise separations among candidate oracle labels.  With two candidate experiments, there is only one fundamental comparison: one must distinguish the first experiment from the second. With many candidate experiments, the fusion problem contains many pairwise comparisons, and these comparisons need not be equally difficult or equally important. Two experiments may be difficult to distinguish but nearly equivalent in information value; another pair may be easy to distinguish but associated with large regret if confused; yet another pair may be both difficult to distinguish and scientifically consequential.  The performance of a distributed design rule is therefore governed by regret-weighted pairwise separability among candidate experiments, rather than by a single aggregate measure of how often local sites identify the oracle design correctly.

The following are the main contributions of this paper.
\begin{enumerate}[(1)]
\item We introduce the finite multiple-experiment fusion model, define the centralized oracle design, and characterize local recommendation mechanisms.  In \thrmref{thrm:Bayes_optimal}, we derive the Bayes-optimal fusion rule under a general design-selection loss.  In \thrmref{thrm:finite_L}, we specialize the result to information-regret loss and show how the binary fusion rule arises as a special case. This comparison demonstrates both continuity and departure: the binary rule is recovered exactly when the candidate set contains two experiments, but the general case requires comparing multiple posterior risks rather than evaluating a single threshold.  In \propref{prop:majority_vote_optimal}, we identify conditions under which majority vote coincides with Bayes fusion and provide explicit examples showing that majority vote can be strictly suboptimal.

\item In \thrmref{thrm:multiclass_Chernoff_bound} and \thrmref{thrm:CB_info_regret_fusion}, we derive performance bounds for the proposed fusion procedure.  In the binary case, performance can be controlled through a Chernoff-type coefficient measuring how well local recommendations distinguish the two possible oracle choices. In the multiple-experiment case, this becomes a family of pairwise coefficients, one for each pair of candidate oracle labels. The resulting bound shows that the regret risk decreases when the distributed sites collectively separate the most consequential pairs of candidate experiments. This provides an interpretable design principle: distributed systems should not merely increase the number of local recommendations; they should improve the reliability of the local recommendation for those design comparisons where an error would cause substantial loss of information.
\end{enumerate}

We complement the theoretical development using two distributed experimental-design settings. The first is a Gaussian linear design model, where each local site has a local precision profile over the candidate experiments and the centralized oracle is determined by the aggregate information gain across all sites. The second is a binary-response design model, where each candidate experiment corresponds to a response design point and the centralized utility depends on the joint predictive distribution of the binary responses. In both settings, local recommendations can conflict with the centralized optimum, and na\"{\i}ve aggregation can lead to inefficient design choices.  We compare majority voting, maximum a posteriori (MAP) fusion, and the proposed regret-weighted Bayes fusion rule in both settings.  Majority vote selects the candidate experiment that receives the largest number of local recommendations.  MAP fusion selects the candidate that is most likely to coincide with the centralized oracle design.  Regret-Bayes fusion selects the candidate that minimizes posterior expected information loss.  

The performance of each rule is evaluated using two complementary criteria:  (i)  Oracle-selection accuracy, that quantifies the proportion of test instances in which the selected experiment coincides exactly with the centralized oracle.  This is the natural performance measure for MAP fusion.  (ii) Average information regret, that measures the average loss in centralized EIG caused by selecting the fused experiment instead of the oracle experiment.  This should be the primary criterion for experimental design, because the objective is not classification accuracy by itself, but preservation of information.  This distinction is essential because a rule that frequently identifies the oracle label need not minimize the information loss caused by incorrect experimental choices.  

The numerical studies consider two settings: baseline and calibrated loss-sensitive.  In the baseline setup, oracle-label accuracy and information regret are closely aligned.  In the calibrated loss-sensitive setup, some candidate experiments are difficult to distinguish from local recommendations, while certain wrong selections incur substantially larger information regret.  The results, therefore, reveal two distinct regimes and provide guidance on which fusion rule should be used:
\begin{enumerate}[(a)]
\item MAP fusion is preferable when the primary goal is to recover the centralized oracle label, or when different wrong experimental choices have roughly comparable information losses.  In this regime, oracle-label accuracy and information regret are closely aligned, so selecting the most probable oracle experiment is a natural objective.  
\item Regret-weighted Bayes fusion is preferable when the objective is to preserve centralized information gain and when different wrong choices have unequal consequences.  In this regime, the most probable oracle label need not minimize posterior expected information loss, and a rule that accounts for the information-regret matrix can yield better design performance. 
\end{enumerate}
Thus, MAP is appropriate for oracle recovery, whereas regret-weighted Bayes fusion is appropriate for loss-sensitive distributed experimental design. 


\section{System model}\label{sec:system_model}
Let $\Xi = \{\xi_1, \ldots, \xi_L\}$ be a finite collection of $L$ candidate experiments. The unknown parameter is denoted by $\theta$, with prior density or probability mass function $p(\theta)$. There are $M$ distributed agents, with agent $m$ having local statistical model $p_m(y_m \mid \theta, \xi_\ell)$, $m=1, \ldots, M$, $\ell=1, \ldots, L$.  If experiment $\xi_\ell$ were implemented centrally, the joint model would be
\begin{equation*}
p(y_1, \ldots, y_M \mid \theta, \xi_\ell) = \prod_{m=1}^M p_m(y_m \mid \theta, \xi_\ell),
\end{equation*}
assuming conditional independence of the observations given $(\theta, \xi_\ell)$.  For the candidate experiment $\xi_\ell$, the centralized EIG is defined as $U(\xi_\ell) = \mathbb{E}_{Y_{1:M} \mid \xi_\ell} \left[ D_{\mathrm{KL}} \{ p(\theta \mid Y_{1:M}, \xi_\ell) \mid p(\theta) \} \right] = I(\theta; Y_{1:M} \mid \xi_\ell)$, the mutual information between the unknown parameter and the full centralized data under experiment $\xi_\ell$.  The centralized oracle design is
\begin{equation}
B = \arg\max_{1 \le \ell \le L} U(\xi_\ell).
\label{eq:central_oracle}
\end{equation}
When ties occur, we use a fixed deterministic tie-breaking rule; for example, the smallest maximizing index
\begin{equation*}
B = \min \left\{ \ell : U(\xi_\ell) = \max_{1 \le r \le L} U(\xi_r) \right\}.
\label{eq:central_oracle_tie}
\end{equation*}

Agent $m$ computes the local EIG $U_m(\xi_\ell) = \mathbb{E}_{Y_m \mid \xi_\ell} \left[ D_{\mathrm{KL}} \{ p_m(\theta \mid Y_m, \xi_\ell) \mid p(\theta) \} \right]$, and reports the local design recommendation 
\begin{equation*}
D_m = \arg\max_{1 \le \ell \le L} U_m(\xi_\ell).
\end{equation*}
Again, ties are resolved by a fixed deterministic rule. Thus, $D_m \in \{1, \ldots, L\}$, $m = 1, \ldots, M$.  The fusion center observes only the local design messages $D = (D_1, \ldots, D_M) \in \{1, \ldots, L\}^M$ and must choose one global experiment $\delta(D) \in \{1, \ldots, L\}$.  The goal is to construct a fusion rule $\delta$ whose selected experiment has centralized information gain as close to that of the centralized oracle as possible.

In experimental design, different suboptimal choices can have different scientific consequences.  For instance, choosing the second-best experiment may be nearly harmless, whereas choosing a much weaker experiment may incur substantial loss.  A natural loss function in such scenarios is the information-gain regret.  We define the information-gain regret of selecting $\xi_i$ when the centralized oracle is $\xi_j$ by $\Delta_{ij} = U(\xi_j) - U(\xi_i)$, whenever $j = B$.  Since $j = B$ implies $\xi_j$ is oracle-optimal, we have $U(\xi_j) \ge U(\xi_i)$, and hence $\Delta_{ij} \ge 0$.  Also, $\Delta_{jj} = 0$.  More generally, if the centralized utilities are treated as random because they depend on an unobserved population state, data-generating environment, or collection of site-specific conditions, we define $\Delta_{ij} = \mathbb{E} \left[ U(\xi_j) - U(\xi_i) \mid B = j \right]$, where $B$ is given by \eqref{eq:central_oracle}. This gives a deterministic regret matrix $\Delta = (\Delta_{ij})_{1 \le i, j \le L}$, with $\Delta_{jj} = 0$, $\Delta_{ij} \ge 0$.  Thus, the fusion center should not merely estimate $B$; rather, it should choose the design that minimizes posterior expected information regret.

For a finite $L$, we can reduce the problem to the oracle label $B \in \{1, \ldots, L\}$.  Since the candidate set is finite, the centralized design problem can be summarized by the index of the experiment that would be chosen under full information. Thus, instead of carrying the entire vector of centralized EIGs through the fusion rule, we encode the oracle decision by the label $B \in \{1, \ldots, L\}$, where $B = j$ means $\xi_j$ is the experiment selected by the centralized oracle.  Let $\rho_j = \mathbb{P}(B = j)$, $j = 1, \ldots, L$, where $\rho_j \ge 0$ and $\sum_{j = 1}^L \rho_j = 1$ denote the prior probability that $\xi_j$ is the experiment selected by the oracle.  For agent $m$, define the local reliability matrix or the recommendation channel $Q_m(a \mid j) = \mathbb{P}(D_m = a \mid B = j)$, $a, j \in \{1, \ldots, L\}$.  Thus, $Q_m(\cdot \mid j)$ is the conditional distribution of agent $m$'s recommendation when the centralized oracle design is $\xi_j$. For each fixed $m$ and $j$, $Q_m(a \mid j) \ge 0$ and $\sum_{a=1}^L Q_m(a \mid j) = 1$.  We assume conditional independence of local design messages given the oracle label:
\begin{equation*}
\mathbb{P}(D_1 = d_1, \ldots, D_M = d_M \mid B = j) = \prod_{m=1}^M Q_m(d_m \mid j).
\end{equation*}
This assumption signifies that once the centralized oracle label $B$ is fixed, the remaining randomness in the local design recommendations is agent-specific.  For a general design-selection loss $C(i, j)$ incurred by selecting $\xi_i$ when the centralized oracle design is $\xi_j$, we assume $C(i, j) \ge 0$, $C(j, j) = 0$.

\section{Main results}\label{sec:main_results}
We now state the Bayes rule in terms of conditional EIG.  Let $G_\ell = U(\xi_\ell)$, $\ell=1, \ldots, L$, denote the centralized EIG for experiment $\xi_\ell$, and $G = (G_1, \ldots, G_L)$.  The oracle EIG value is $G_B = \max_{1 \le \ell \le L} G_{\ell}$.  If the fusion center selects $\xi_{\ell}$, the information-gain regret is $G_B - G_{\ell}$.  A fusion rule is a measurable map $\delta : \{1, \ldots, L\}^M \to \{1, \ldots, L\}$.  Its Bayes risk under information-gain regret is $R(\delta) = \mathbb{E} \left[ G_B - G_{\delta(D)} \right]$.
\begin{theorem}
The Bayes-optimal fusion rule maximizes posterior expected centralized information gain, and is given by 
\begin{equation}
\delta^*(d) = \arg\max_{1 \le i \le L} \mathbb{E} \left[ G_i \mid D = d \right].
\label{eq:Bayes_optimal}
\end{equation}
\label{thrm:Bayes_optimal}
\end{theorem}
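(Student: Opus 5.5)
The plan is to reduce the minimization of Bayes risk to a pointwise minimization of the conditional risk, one message vector at a time. This is possible because $D$ takes only finitely many values, $L^M$ of them.

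First, decompose the risk. By the tower property,
\begin{equation*}
R(\delta) = \mathbb{E}\left[ G_B \right] - \mathbb{E}\left[ G_{\delta(D)} \right],
\end{equation*}
provided each $G_\ell$ is integrable. Integrability holds whenever the centralized EIGs have finite expectation. It is automatic in the deterministic case and is implicitly assumed in the random-utility case, where $\Delta_{ij}$ is defined as a conditional expectation. The first term does not depend on $\delta$. Minimizing $R(\delta)$ is therefore equivalent to maximizing $\mathbb{E}[G_{\delta(D)}]$.

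Next, condition on $D$. We have
\begin{equation*}
\mathbb{E}\left[ G_{\delta(D)} \right] = \sum_{d} \mathbb{P}(D=d)\, \mathbb{E}\left[ G_{\delta(d)} \mid D=d \right].
\end{equation*}
The key observation here is that on the event $\{D=d\}$, the index $\delta(D)=\delta(d)$ is fixed. Hence $\mathbb{E}[G_{\delta(D)}\mathbf{1}\{D=d\}] = \mathbb{E}[G_{\delta(d)}\mathbf{1}\{D=d\}]$. This step is where care is needed, since the index inside $G$ is itself random. Writing $G_{\delta(D)} = \sum_i G_i \mathbf{1}\{\delta(D)=i\}$ makes the identity immediate.

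The sum is then maximized term by term. For each $d$ with $\mathbb{P}(D=d)>0$, the choice $\delta(d)$ affects only its own summand, and
\begin{equation*}
\mathbb{E}[G_{\delta(d)}\mid D=d] \le \max_i \mathbb{E}[G_i \mid D=d],
\end{equation*}
with equality when $\delta(d)=\delta^*(d)$. Summing over $d$ gives $R(\delta^*)\le R(\delta)$ for every fusion rule $\delta$. For $d$ of probability zero, the value of $\delta$ is irrelevant, and ties among maximizers may be broken by any fixed rule. The resulting $\delta^*$ is automatically measurable because its domain is finite.

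Finally, I would connect the result to the regret notation of Section~\ref{sec:system_model}. Conditioning further on $B$ and using $\mathbb{E}[G_B - G_i \mid D=d] = \sum_j \mathbb{P}(B=j\mid D=d)\,\mathbb{E}[G_j - G_i\mid B=j, D=d]$ gives an equivalent form as minimum posterior expected regret. This sets up the general-loss and $\Delta$-specialized versions that follow. Replacing the inner conditional expectation by $\Delta_{ij}$ requires $G$ to be conditionally independent of $D$ given $B$, an assumption I would flag rather than use in the main theorem.

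I expect no serious obstacle in this argument. The only delicate points are the integrability of the $G_\ell$ and the handling of the random index in the conditioning step.
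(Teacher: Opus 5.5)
Your proposal is correct and is essentially the paper's argument: the $G_B$ term does not depend on the selected action, so minimizing the Bayes risk reduces to maximizing $\mathbb{E}[G_i \mid D=d]$ separately for each message vector $d$. The only difference is that you subtract $\mathbb{E}[G_B]$ at the unconditional level before conditioning on $D$, whereas the paper subtracts $\mathbb{E}[G_B \mid D=d]$ inside the conditional risk $r(i \mid d)$. Your explicit handling of integrability, the random index in $G_{\delta(D)}$, and zero-probability messages just fills in details the paper leaves implicit.
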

\vspace{-0.35cm}
\begin{IEEEproof} 
For a fixed observed message vector $D = d$, the conditional risk of selecting design $\xi_i$ is $r(i \mid d) = \mathbb{E} \left[ G_B - G_i \mid D = d \right]$.  Since $G_B$ does not depend on the selected action $i$, we have $r(i \mid d) = \mathbb{E} \left[ G_B \mid D = d \right] - \mathbb{E} \left[ G_i \mid D = d \right]$,  where the first term is common to all $i$. Therefore, minimizing $r(i \mid d)$ over $i$ is equivalent to maximizing $\mathbb{E} \left[ G_i \mid D = d \right]$.  Hence any rule satisfying \eqref{eq:Bayes_optimal} minimizes the conditional risk for every $d$, and therefore, minimizes the unconditional Bayes risk $R(\delta)$.  
\end{IEEEproof} 
\thrmref{thrm:Bayes_optimal} says that the fusion center should choose the experiment with the largest posterior expected centralized information gain, not necessarily the experiment receiving the largest number of local votes.  In the following, we show that this result can be specialized to the information-regret loss. 

\begin{theorem}
For a loss matrix $C(i, j)$, the Bayes-optimal fusion rule in the distributed formulation is
\begin{equation*}
\delta_C^*(d_1, \ldots, d_M) = \arg\min_{1 \le i \le L} \sum_{j=1}^L C(i, j) \rho_j \prod_{m=1}^M Q_m(d_m \mid j).
\end{equation*}
\label{thrm:finite_L}
\end{theorem}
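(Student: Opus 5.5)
The plan is a pointwise conditional-risk minimization, following the argument used for \thrmref{thrm:Bayes_optimal} but with the general loss $C(i,j)$ in place of the regret $G_B - G_i$. For any fusion rule $\delta$, the Bayes risk is $R_C(\delta) = \mathbb{E}[C(\delta(D), B)]$. Since $D$ takes finitely many values in $\{1,\ldots,L\}^M$, I will expand this expectation over the joint law of $(D,B)$:
\begin{equation*}
R_C(\delta) = \sum_{d \in \{1,\ldots,L\}^M} \sum_{j=1}^L C(\delta(d), j)\, \mathbb{P}(B=j, D=d).
\end{equation*}
By the prior $\rho_j$ and the conditional independence assumption of \secref{sec:system_model}, the joint probability factorizes as $\mathbb{P}(B=j, D=d) = \rho_j \prod_{m=1}^M Q_m(d_m \mid j)$.

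The key step is that the outer sum is over $d$, and $\delta$ enters each term only through the single value $\delta(d)$. Writing
\begin{equation*}
\phi(i \mid d) = \sum_{j=1}^L C(i,j)\, \rho_j \prod_{m=1}^M Q_m(d_m \mid j),
\end{equation*}
we have $R_C(\delta) = \sum_d \phi(\delta(d) \mid d)$. Hence $R_C(\delta) \ge \sum_d \min_i \phi(i \mid d)$ for every $\delta$. Equality holds exactly when $\delta(d) \in \arg\min_i \phi(i \mid d)$ for each $d$, which gives $\delta_C^*$. Any measurable choice, including a deterministic tie-break among minimizers, is admissible because the domain is finite.

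I will then add a remark connecting this to posterior risk. When $\mathbb{P}(D=d) > 0$, dividing $\phi(i \mid d)$ by $\mathbb{P}(D=d) = \sum_j \rho_j \prod_m Q_m(d_m \mid j)$ gives the posterior expected loss $\mathbb{E}[C(i,B) \mid D=d]$, by Bayes' rule. Dividing by this positive constant does not change the argmin. Messages with $\mathbb{P}(D=d)=0$ contribute zero to the risk for every choice of $i$, so the rule may be defined arbitrarily there, and the stated formula remains a valid choice.

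The main obstacle is the link to \thrmref{thrm:Bayes_optimal}, not the minimization itself. There, the regret $G_B - G_i$ depends on the whole random vector $G$, and not only on the label $B$. To specialize to $C(i,j) = \Delta_{ij}$, I would note that $\mathbb{E}[G_B - G_i \mid D=d] = \sum_j \Delta_{ij}\, \mathbb{P}(B=j \mid D=d)$. This identity holds if $G$ is conditionally independent of $D$ given $B$, which is implicit in reducing the problem to the label $B$. I would state this assumption explicitly and then condition on $B$ via the tower property. Under it, $\delta_\Delta^*$ coincides with \eqref{eq:Bayes_optimal}.
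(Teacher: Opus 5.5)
Your proof is correct and is essentially the paper's argument. The paper computes the posterior by Bayes' rule and discards the $i$-independent denominator $\sum_k \rho_k \prod_m Q_m(d_m \mid k)$. You instead minimize the unnormalized weights $\phi(i \mid d)$ term by term inside the risk sum, which makes explicit why pointwise minimization gives the Bayes rule and also covers messages with $\mathbb{P}(D=d)=0$, where the paper's division is undefined.

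Your closing caveat is also right, though it lies outside this theorem and the paper does not state it. Identifying $\delta_\Delta^*$ with the rule of Theorem~\ref{thrm:Bayes_optimal} needs an assumption such as $\mathbb{E}[G_i \mid B=j, D=d] = \mathbb{E}[G_i \mid B=j]$. That condition need not hold in the paper's own examples, where $D$ and $G$ are both functions of the same site-level quantities.
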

\begin{IEEEproof} 
Using Bayes' rule, we have
\begin{equation*}
\begin{split}
\mathbb{P}(B = j \mid D = d) &= \frac{\rho_j \mathbb{P}(D = d \mid B = j)}{\sum_{k=1}^L \rho_k \mathbb{P}(D = d \mid B = k)} \\
&\stackrel{(a)}= \frac{\rho_j \prod_{m=1}^M Q_m(d_m \mid j)}{\sum_{k=1}^L \rho_k \prod_{m=1}^M Q_m(d_m \mid k)},
\end{split}
\end{equation*}
where $(a)$ follows from conditional independence.  The conditional risk of selecting $\xi_i$ after observing $D = d$ is
\begin{equation*}
\begin{split}
r_C(i \mid d) &= \mathbb{E} \left[ C(i, B) \mid D = d \right] \\
&= \sum_{j=1}^L C(i, j) \mathbb{P}(B = j \mid D = d) \\
&= \frac{\sum_{j=1}^L C(i, j) \rho_j \prod_{m=1}^M Q_m(d_m \mid j)}{\sum_{k=1}^L \rho_k \prod_{m=1}^M Q_m(d_m \mid k)},
\end{split}
\end{equation*}
where the denominator does not depend on $i$. Hence, minimizing $r_C(i \mid d)$ over $i$ is equivalent to minimizing
\begin{equation*}
\sum_{j=1}^L C(i, j) \rho_j \prod_{m=1}^M Q_m(d_m \mid j).
\end{equation*}
Therefore, we have 
\begin{equation*}
\delta_C^*(d) = \arg\min_{1 \le i \le L} \sum_{j=1}^L C(i, j) \rho_j \prod_{m=1}^M Q_m(d_m \mid j).
\end{equation*}
\end{IEEEproof} 
Note that if all wrong design choices are treated equally, then $C(i, j) = \mathbf{1}_{i \neq j}$. The conditional risk is
\begin{equation*}
r_C(i \mid d) = \mathbb{P}(B \neq i \mid D = d) = 1 - \mathbb{P}(B = i \mid D = d).
\end{equation*}
Thus, minimizing conditional risk is equivalent to maximizing the posterior probability of the oracle label.  Therefore,
\begin{equation*}
\delta_{\mathrm{MAP}}(d) = \arg\max_{1 \le j \le L} \mathbb{P}(B = j \mid D = d).
\end{equation*}
Under ordinary $0-1$ oracle-label loss (the finite-class fusion), we can write  
\begin{align*}
\delta_{\mathrm{MAP}}(d) &= \arg\max_{1 \le j \le L} \rho_j \prod_{m=1}^M Q_m(d_m \mid j) \\
&\stackrel{(b)}= \arg\max_{1 \le j \le L} \left\{ \log \rho_j + \sum_{m=1}^M \log Q_m(d_m \mid j) \right\},
\end{align*}
where $(b)$ holds if all entries $Q_m(d_m \mid j) > 0$.  This is the $L$-candidate analog of the binary fusion rule. For $L > 2$, there is no single scalar threshold\textemdash the fusion center compares $L$ posterior scores.

For experimental design, the more appropriate loss is the information-regret matrix $\Delta_{ij} = \mathbb{E} \left[ U(\xi_j) - U(\xi_i) \mid B = j \right]$, where $\Delta_{ij}$ is the expected centralized information loss incurred by choosing $\xi_i$ when $\xi_j$ is the oracle experiment.  Letting $C(i, j) = \Delta_{ij}$ in \thrmref{thrm:finite_L} gives the Bayes-optimal information-regret fusion rule:
\begin{equation*}
\delta_{\Delta}^*(d_1, \ldots, d_M) = \arg\min_{1 \le i \le L} \sum_{j=1}^L \Delta_{ij} \rho_j \prod_{m=1}^M Q_m(d_m \mid j).
\end{equation*}
In posterior form, we can write 
\begin{equation*}
\delta_{\Delta}^*(d) = \arg\min_{1 \le i \le L} \sum_{j=1}^L \Delta_{ij} \mathbb{P}(B = j \mid D = d).
\end{equation*}
This rule differs from ordinary MAP fusion.  It may intentionally choose a design whose posterior probability of being oracle-optimal is not maximal, if that design has smaller posterior expected information regret.  This is important in distributed experimental design: the fusion center should not simply seek the experiment that is most likely to be best in terms of label accuracy.  It should instead seek the experiment that minimizes the expected loss of centralized information.

\subsection{The binary setting, $L = 2$}
For the binary case ($L = 2$), we write the two candidate experiments as $\xi_0$ and $\xi_1$.  With $B \in \{0, 1\}$, let $\rho_1 = \mathbb{P}(B = 1)$, $\rho_0 = \mathbb{P}(B = 0)$. For site $m$, define $p_m = \mathbb{P}(D_m = 1 \mid B = 1)$, $q_m = \mathbb{P}(D_m = 1 \mid B = 0)$.  Then, $\mathbb{P}(D_m = 0 \mid B = 1) = 1 - p_m$, and $\mathbb{P}(D_m = 0 \mid B = 0) = 1 - q_m$.  For an observed vector $d = (d_1, \ldots, d_M) \in \{0, 1\}^M$, the posterior odds are
\begin{equation*}
\frac{\mathbb{P}(B = 1 \mid D = d)}{\mathbb{P}(B = 0 \mid D = d)} = \frac{\rho_1}{\rho_0} \prod_{m=1}^M \frac{\mathbb{P}(D_m = d_m \mid B = 1)}{\mathbb{P}(D_m = d_m \mid B = 0)}.
\end{equation*}
Therefore,
\begin{align*}
& \log \frac{\mathbb{P}(B = 1 \mid D = d)}{\mathbb{P}(B = 0 \mid D = d)}\\ 
&= \log \frac{\rho_1}{\rho_0} + \sum_{m=1}^M \left[ d_m \log \frac{p_m}{q_m} + (1 - d_m) \log \frac{1 - p_m}{1 - q_m} \right].
\end{align*}

Suppose the loss of choosing $\xi_1$ when $\xi_0$ is oracle-optimal is $C(1, 0)$, and the loss of choosing $\xi_0$ when $\xi_1$ is oracle-optimal is $C(0, 1)$. The conditional risk of choosing $\xi_1$ is $C(1, 0) \mathbb{P}(B = 0 \mid D = d)$, whereas the conditional risk of choosing $\xi_0$ is $C(0, 1) \mathbb{P}(B = 1 \mid D = d)$. Hence the Bayes rule chooses $\xi_1$ whenever $C(1, 0) \mathbb{P}(B = 0 \mid D = d) < C(0, 1) \mathbb{P}(B = 1 \mid D = d)$, which yields
\begin{equation*}
\frac{\mathbb{P}(B = 1 \mid D = d)}{\mathbb{P}(B = 0 \mid D = d)} > \frac{C(1, 0)}{C(0, 1)}.
\end{equation*}
Taking logarithms, the rule chooses $\xi_1$ whenever
\begin{align*}
& \log \frac{\rho_1}{\rho_0} + \sum_{m=1}^M \left[ d_m \log \frac{p_m}{q_m} + (1 - d_m) \log \frac{1 - p_m}{1 - q_m} \right] \\
&> \log \frac{C(1, 0)}{C(0, 1)}.
\end{align*}
Thus, the binary threshold rule is a special case of the finite-$L$ Bayes fusion rule \cite{Nagananda2026c}.

\subsection{Suboptimality of majority vote}
The majority-vote rule is
\begin{equation*}
\delta_{\mathrm{MV}}(d) = \arg\max_{1 \le a \le L} \sum_{m=1}^M \mathbf{1}_{d_m = a}.
\end{equation*}
It chooses the experiment receiving the largest number of local recommendations.  Majority vote ignores three quantities that are central to Bayes fusion: the prior probabilities $\rho_j$, the site-specific reliability matrices $Q_m(a \mid j)$, and the regret matrix $\Delta_{ij}$. It is optimal only under restrictive symmetry assumptions.  In the following, we show that when the reliability matrix is symmetric, the majority vote is Bayes-optimal. 
\begin{proposition}
Suppose we consider the $0-1$ loss.  Let $\rho_1 = \cdots = \rho_L = 1/L$, and suppose that every site has the same symmetric reliability matrix
\begin{equation*}
Q_m(a \mid j) = \begin{cases} \alpha, & a=j, \\ \beta, & a \neq j, \end{cases}
\end{equation*}
where $\beta = (1 - \alpha)/(L - 1)$ and $\alpha > \beta$.  Then, the Bayes rule is the majority voting rule. 
\label{prop:majority_vote_optimal}
\end{proposition}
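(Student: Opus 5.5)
Under $0$-$1$ loss the Bayes rule is the MAP rule, so I will start from the MAP form already derived after \thrmref{thrm:finite_L}:
\begin{equation*}
\delta_{\mathrm{MAP}}(d) = \arg\max_{1 \le j \le L} \rho_j \prod_{m=1}^M Q_m(d_m \mid j).
\end{equation*}
The uniform prior $\rho_j = 1/L$ is a common factor, so it drops out of the maximization. It therefore suffices to show that the likelihood $\prod_{m} Q_m(d_m \mid j)$ is a strictly increasing function of the vote count $N_j(d) = \sum_{m=1}^M \mathbf{1}_{d_m = j}$.

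The key computation uses the symmetric reliability matrix. For a fixed candidate oracle label $j$, exactly $N_j(d)$ sites report $j$, and each contributes a factor $\alpha$. The remaining $M - N_j(d)$ sites contribute a factor $\beta$ each. Hence
\begin{equation*}
\prod_{m=1}^M Q_m(d_m \mid j) = \alpha^{N_j(d)} \beta^{M - N_j(d)} = \beta^M \left( \frac{\alpha}{\beta} \right)^{N_j(d)}.
\end{equation*}
The factor $\beta^M$ does not depend on $j$. Since $\alpha > \beta$, the ratio $\alpha/\beta$ exceeds one, so $t \mapsto (\alpha/\beta)^t$ is strictly increasing. Consequently
\begin{equation*}
\arg\max_{j} \mathbb{P}(B = j \mid D = d) = \arg\max_j N_j(d) = \delta_{\mathrm{MV}}(d).
\end{equation*}

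The one delicate point is the case $\beta = 0$, i.e.\ $\alpha = 1$. Then the ratio $\alpha/\beta$ is undefined, and the log form $(b)$ is unavailable. I would handle it by working directly with $\alpha^{N_j}\beta^{M-N_j}$ rather than with the ratio.
\begin{itemize}
\item When $\beta = 0$, every site reports the true label $j$ with probability one. The only message vectors with positive probability are unanimous, and for these both rules pick the unanimous label.
\item Off this support, every posterior score is zero, so any choice is Bayes-optimal and majority vote is admissible.
\end{itemize}
Equivalently, one can simply assume $\beta > 0$ as the generic case.

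Ties need one further remark. The sets $\{j : N_j(d) \text{ maximal}\}$ and $\{j : \text{posterior maximal}\}$ coincide exactly, by strict monotonicity. Hence majority vote with any tie-breaking rule is a minimizer of the conditional risk for every $d$, and is therefore Bayes-optimal. I expect no real obstacle beyond stating these degenerate and tie cases carefully.
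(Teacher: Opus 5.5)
Your proof is correct and follows the paper's argument: the uniform prior cancels, the likelihood under label $j$ is $\alpha^{N_j(d)}\beta^{M-N_j(d)}$, and $\alpha>\beta$ makes the posterior strictly increasing in $N_j(d)$ (the paper takes logarithms, while you write the likelihood as $\beta^M(\alpha/\beta)^{N_j(d)}$). Your separate handling of $\beta=0$ and of ties is a small but genuine tightening, since the paper's logarithmic step tacitly assumes $\beta>0$ and never discusses tie-breaking.
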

\begin{IEEEproof} 
For an observed message vector $d = (d_1, \ldots, d_M)$, define $N_j(d) = \sum_{m=1}^M \mathbf{1}_{d_m = j}$. This is the number of sites recommending $\xi_j$.  Under the stated assumptions, $\mathbb{P}(D = d \mid B = j) = \prod_{m=1}^M Q_m(d_m \mid j)$. If $d_m = j$, the contribution is $\alpha$. If $d_m \neq j$, the contribution is $\beta$. Therefore, $\mathbb{P}(D = d \mid B = j) = \alpha^{N_j(d)} \beta^{M - N_j(d)}$.  Since the priors are uniform, $\mathbb{P}(B = j \mid D = d) \propto \alpha^{N_j(d)} \beta^{M - N_j(d)}$. Taking logarithms,
\begin{align*}
\log \mathbb{P}(B = j \mid D = d) &= \text{constant} + N_j(d) \log \alpha\\ 
&+ (M - N_j(d)) \log \beta \\
&= \text{constant} + N_j(d) \log \frac{\alpha}{\beta}.
\end{align*}
Since $\alpha > \beta$, the posterior probability is maximized by maximizing $N_j(d)$.  Hence, the Bayes rule chooses the experiment with the largest number of votes, which is majority voting. 
\end{IEEEproof} 
\propref{prop:majority_vote_optimal} explains why majority vote can appear natural, but it also shows its limitations. If priors are nonuniform, sites have different reliabilities, errors are asymmetric, or losses are not $0-1$, majority vote is generally not Bayes-optimal.  This is illustrated via the following example. 

Let $L = 3$ and $M = 3$. Suppose the prior over oracle labels is uniform: $\rho_1 = \rho_2 = \rho_3 = \frac{1}{3}$. Assume $0-1$ loss. Let sites 1 and 2 be weakly informative, with reliability matrix
\begin{equation*}
Q_m(a \mid j) = \begin{cases} 0.36, & a = j, \\ 0.32, & a \neq j, \end{cases} \qquad m=1, 2.
\end{equation*}
Let site 3 be highly reliable, with
\begin{equation*}
Q_3(a \mid j) = \begin{cases} 0.90, & a = j, \\ 0.05, & a \neq j. \end{cases}
\end{equation*}
Suppose the observed recommendations are $d = (1, 1, 2)$. Majority vote chooses $\xi_1$, because two sites recommend experiment 1. However, the Bayes rule chooses $\xi_2$.
The likelihood under oracle label $B=1$ is $\mathbb{P}(D = d \mid B = 1) = Q_1(1 \mid 1) Q_2(1 \mid 1) Q_3(2 \mid 1) = 036 \times 0.36 \times 0.05 = 0.00648$.
The likelihood under oracle label $B=2$ is $\mathbb{P}(D = d \mid B = 2) = Q_1(1 \mid 2) Q_2(1 \mid 2) Q_3(2 \mid 2) = 0.32 \times 0.32 \times 0.90 = 0.09216$.
The likelihood under oracle label $B=3$ is $\mathbb{P}(D = d \mid B = 3) = Q_1(1 \mid 3) Q_2(1 \mid 3) Q_3(2 \mid 3) = 0.32 \times 0.32 \times 0.05 = 0.00512$.
Since the priors are uniform, posterior probabilities are proportional to these likelihoods. The largest likelihood is obtained for $B = 2$. Hence $\delta_{\mathrm{MAP}}(1, 1, 2) = 2$. Thus Bayes fusion chooses $\xi_2$, whereas majority vote chooses $\xi_1$. Therefore, majority vote is strictly suboptimal.  This example shows that one reliable site can rationally outweigh several weak sites. Majority vote cannot express this because it assigns equal weight to all recommendations.

\vspace{-0.05cm}
\subsection{Multi-class Chernoff Bound}
For a finite set of $L$ candidate experiments, we also obtain a performance bound for the multi-class fusion rule.  First, consider the pairwise case.  For two distinct oracle labels $i$ and $j$, define the site-level pairwise Chernoff coefficient
\begin{equation}
c_{m,ij}(s) = \sum_{a=1}^L Q_m(a \mid j)^{1-s} Q_m(a \mid i)^s, \quad 0 \le s \le 1,
\label{eq:pairwise_Chernoff_coeff}
\end{equation}
where $c_{m,ij}(s)$ measures how difficult it is for site $m$'s message distribution to distinguish oracle label $j$ from oracle label $i$. If $Q_m(\cdot \mid j)$ and $Q_m(\cdot \mid i)$ are similar, then $c_{m,ij}(s)$ is close to 1 and it is difficult to distinguish labels $i$ and $j$.  If they are well separated, then $c_{m,ij}(s)$ is smaller.  Define the aggregate pairwise coefficient
\begin{equation*}
C_{ij}(s) = \prod_{m=1}^M c_{m,ij}(s).
\end{equation*}
Under conditional independence, this product structure is exact.  The corresponding pairwise Chernoff information is
\begin{align*}
\mathcal{C}_{ij} &= -\log \left[ \inf_{0 \le s \le 1} \prod_{m=1}^M c_{m,ij}(s) \right] \\
&= \sup_{0 \le s \le 1} \sum_{m=1}^M \left\{ -\log c_{m,ij}(s) \right\}.
\end{align*}
Thus, pairwise distinguishability accumulates additively across sites.  Next, we generalize this case to derive the multi-class Chernoff bound.  

Let $P_j(d) = \mathbb{P}(D = d \mid B = j) = \prod_{m=1}^M Q_m(d_m \mid j)$. The maximum a posteriori rule is
\begin{equation*}
\delta_{\mathrm{MAP}}(d) = \arg\max_{1 \le j \le L} \rho_j P_j(d).
\end{equation*}
We next derive the multi-class Chernoff error bound.
\begin{theorem}
For the maximum a posteriori fusion rule,
\begin{equation*}
\mathbb{P}\{ \delta_{\mathrm{MAP}}(D) \neq B \} \le \sum_{j=1}^L \sum_{\substack{i=1 \\ i \neq j}}^L \inf_{0 \le s \le 1} \rho_j^{1-s} \rho_i^s \prod_{m=1}^M c_{m,ij}(s),
\end{equation*}
where $c_{m,ij}(s)$ is given by \eqref{eq:pairwise_Chernoff_coeff}. 
\label{thrm:multiclass_Chernoff_bound}
\end{theorem}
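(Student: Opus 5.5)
The plan is to decompose the MAP error event by the true label and then by which competing label wins, and to bound each pairwise event with the elementary Chernoff inequality $\mathbf{1}\{x \ge y\} \le (x/y)^s$ for $x,y>0$ and $s\in[0,1]$. First I will write
\begin{equation*}
\mathbb{P}\{\delta_{\mathrm{MAP}}(D)\neq B\} = \sum_{j=1}^L \rho_j \sum_{d} P_j(d)\,\mathbf{1}\{\delta_{\mathrm{MAP}}(d)\neq j\}.
\end{equation*}
If $\delta_{\mathrm{MAP}}(d)\neq j$, then some $i\neq j$ satisfies $\rho_i P_i(d)\ge \rho_j P_j(d)$. This holds whichever deterministic tie-breaking rule is used, since the chosen index attains the maximum. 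A union bound then gives $\mathbf{1}\{\delta_{\mathrm{MAP}}(d)\neq j\}\le \sum_{i\neq j}\mathbf{1}\{\rho_i P_i(d)\ge\rho_j P_j(d)\}$.

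Next, for each ordered pair $(j,i)$ and each $s\in[0,1]$, I will bound the pairwise term
\begin{equation*}
\rho_j P_j(d)\,\mathbf{1}\{\rho_i P_i(d)\ge\rho_j P_j(d)\}\le \rho_j P_j(d)\left(\frac{\rho_i P_i(d)}{\rho_j P_j(d)}\right)^s=(\rho_jP_j(d))^{1-s}(\rho_iP_i(d))^{s}.
\end{equation*}
This is valid when $\rho_jP_j(d)>0$. When $\rho_jP_j(d)=0$ the left side is $0$, so the inequality holds trivially with the convention $0^0=1$, including at $s=0,1$. Summing over $d$ yields $\rho_j^{1-s}\rho_i^s\sum_d P_j(d)^{1-s}P_i(d)^s$.

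The product structure from conditional independence then factorizes the sum over $d\in\{1,\dots,L\}^M$:
\begin{equation*}
\sum_d \prod_{m=1}^M Q_m(d_m\mid j)^{1-s}Q_m(d_m\mid i)^s=\prod_{m=1}^M\sum_{a=1}^L Q_m(a\mid j)^{1-s}Q_m(a\mid i)^s=\prod_{m=1}^M c_{m,ij}(s),
\end{equation*}
which matches \eqref{eq:pairwise_Chernoff_coeff}. Since the bound holds for every $s$, and $s$ may be chosen separately for each pair $(j,i)$ (the pair terms are summed independently), I then take the infimum inside the double sum to obtain the stated inequality.

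I expect the main obstacle to be bookkeeping rather than depth. The key point is to put the infimum over $s$ inside the sum legitimately, which works because the $s$ choice is made after the union bound, pair by pair. The other point is to handle boundary cases: zero likelihoods, the endpoints $s\in\{0,1\}$, and ties in the argmax. For ties, I will use the non-strict inequality $\ge$ in the pairwise event so that the union bound covers every tie-breaking convention.
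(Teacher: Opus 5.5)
Your proposal is correct and follows the paper's proof essentially step for step. Both arguments decompose the error by the true label, apply a union bound over the non-strict events $\{\rho_i P_i(D) \ge \rho_j P_j(D)\}$ for $i \neq j$, use the pointwise Chernoff inequality, factorize the sum over $d$ by conditional independence, and take the infimum over $s$ pair by pair. Your explicit handling of zero likelihoods, the $0^0$ convention at $s \in \{0,1\}$, and tie-breaking is a harmless refinement that the paper leaves implicit.
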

\begin{IEEEproof} 
The probability of error is
\begin{equation*}
\mathbb{P}\{ \delta_{\mathrm{MAP}}(D) \neq B \} = \sum_{j=1}^L \rho_j \mathbb{P}_j \{ \delta_{\mathrm{MAP}}(D) \neq j \},
\end{equation*}
where $\mathbb{P}_j$ denotes probability conditional on $B = j$.  If $\delta_{\mathrm{MAP}}(d) \neq j$, then there exists some $i \neq j$ such that $\rho_i P_i(d) \ge \rho_j P_j(d)$. Therefore,
\begin{equation*}
\{ \delta_{\mathrm{MAP}}(D) \neq j \} \subseteq \bigcup_{\substack{i=1 \\ i \neq j}}^L \{ \rho_i P_i(D) \ge \rho_j P_j(D) \}.
\end{equation*}
By the union bound, we have
\begin{equation*}
\rho_j \mathbb{P}_j \{ \delta_{\mathrm{MAP}}(D) \neq j \} \le \sum_{\substack{i=1 \\ i \neq j}}^L \rho_j \mathbb{P}_j \{ \rho_i P_i(D) \ge \rho_j P_j(D) \}.
\end{equation*}
Fix $i \neq j$, and define $A_{ij} = \{ d : \rho_i P_i(d) \ge \rho_j P_j(d) \}$. Then $\rho_j \mathbb{P}_j(A_{ij}) = \sum_{d \in A_{ij}} \rho_j P_j(d)$. For $d \in A_{ij}$, $\rho_i P_i(d) \ge \rho_j P_j(d)$. Hence, for any $0 \le s \le 1$,
$$
\rho_j P_j(d) \le \left( \rho_j P_j(d) \right)^{1-s} \left( \rho_i P_i(d) \right)^{s}.
$$
Therefore, we get the following inequality:
\begin{align*}
\rho_j \mathbb{P}_j(A_{ij}) &\le \sum_d \left( \rho_j P_j(d) \right)^{1-s} \left( \rho_i P_i(d) \right)^{s}  \\
&= \rho_j^{1-s} \rho_i^s \sum_d P_j(d)^{1-s} P_i(d)^s.
\end{align*}
Using conditional independence, $P_j(d) = \prod_{m=1}^M Q_m(d_m \mid j)$ and $P_i(d) = \prod_{m=1}^M Q_m(d_m \mid i)$. Thus,
\begin{equation*}
P_j(d)^{1-s} P_i(d)^s = \prod_{m=1}^M Q_m(d_m \mid j)^{1-s} Q_m(d_m \mid i)^s.
\end{equation*}
Summing over all $d = (d_1, \ldots, d_M) \in \{1, \ldots, L\}^M$,
\begin{align*}
\sum_d P_j(d)^{1-s} P_i(d)^s &= \prod_{m=1}^M \sum_{a=1}^L Q_m(a \mid j)^{1-s} Q_m(a \mid i)^s  \\
&= \prod_{m=1}^M c_{m,ij}(s).
\end{align*}
This yields the following inequality:
\begin{equation*}
\rho_j \mathbb{P}_j(A_{ij}) \le \rho_j^{1-s} \rho_i^s \prod_{m=1}^M c_{m,ij}(s),
\end{equation*}
which holds for every $s \in [0, 1]$.  Hence, it also holds after taking the infimum over $s$. Summing over all $j$ and all $i \neq j$ yields the desired result.  
\end{IEEEproof} 

In the following, we derive the multi-class Chernoff bound for information-regret fusion.
\begin{theorem}
Under the finite-class fusion model, $R_\Delta(\delta_\Delta^*) = \mathbb{E} \left[ \Delta_{\delta_\Delta^*(D), B} \right]$ satisfies
\begin{equation*}
R_\Delta(\delta_\Delta^*) \le \sum_{j=1}^L \sum_{\substack{i=1 \\ i \neq j}}^L \Delta_{ij} \inf_{0 \le s \le 1} \rho_j^{1-s} \rho_i^s \prod_{m=1}^M c_{m,ij}(s).
\end{equation*}
\label{thrm:CB_info_regret_fusion}
\end{theorem}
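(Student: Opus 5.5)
The plan is to avoid analyzing $\delta_\Delta^*$ directly and instead compare it with the MAP rule, whose error events were already controlled pairwise in the proof of \thrmref{thrm:multiclass_Chernoff_bound}. By \thrmref{thrm:finite_L} with $C(i,j)=\Delta_{ij}$, the rule $\delta_\Delta^*$ minimizes the conditional risk $\sum_j \Delta_{ij}\mathbb{P}(B=j\mid D=d)$ for every $d$. Hence it minimizes the Bayes risk $R_\Delta(\delta)=\mathbb{E}[\Delta_{\delta(D),B}]$ over all fusion rules. In particular, $R_\Delta(\delta_\Delta^*)\le R_\Delta(\delta_{\mathrm{MAP}})$, so it suffices to bound the regret risk of MAP fusion.

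Next I will decompose that risk by the true label and the selected label. Since $\Delta_{jj}=0$,
\begin{equation*}
R_\Delta(\delta_{\mathrm{MAP}})=\sum_{j=1}^L\sum_{\substack{i=1\\ i\neq j}}^L \Delta_{ij}\,\rho_j\,\mathbb{P}_j\{\delta_{\mathrm{MAP}}(D)=i\}.
\end{equation*}
For fixed $i\neq j$, the event $\{\delta_{\mathrm{MAP}}(D)=i\}$ forces $\rho_iP_i(D)\ge\rho_jP_j(D)$, whatever tie-breaking rule is used. It is therefore contained in the set $A_{ij}$ defined in the proof of \thrmref{thrm:multiclass_Chernoff_bound}. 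Because these events are disjoint in $i$, no union bound is needed at this stage: each term is simply bounded by $\Delta_{ij}\rho_j\mathbb{P}_j(A_{ij})$, using $\Delta_{ij}\ge 0$.

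Then I will reuse verbatim the Chernoff step from the proof of \thrmref{thrm:multiclass_Chernoff_bound}. On $A_{ij}$ we have $\rho_jP_j(d)\le(\rho_jP_j(d))^{1-s}(\rho_iP_i(d))^s$. Summing over all $d$ and factorizing via conditional independence gives
\begin{equation*}
\rho_j\mathbb{P}_j(A_{ij})\le\rho_j^{1-s}\rho_i^s\prod_{m=1}^M c_{m,ij}(s)\quad\text{for every } s\in[0,1].
\end{equation*}
Since $s$ can be chosen separately for each pair $(i,j)$, taking the infimum termwise and summing yields the stated bound.

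I do not expect a genuine obstacle. The only points needing care are (i) justifying the comparison $R_\Delta(\delta_\Delta^*)\le R_\Delta(\delta_{\mathrm{MAP}})$, which holds because pointwise minimization of conditional risk implies minimization of the unconditional risk, and (ii) checking that ties in the MAP rule still land in $A_{ij}$, which is why the non-strict inequality in $A_{ij}$ matters.
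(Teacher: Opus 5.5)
Your proposal is correct and follows essentially the same route as the paper's proof. Both bound $R_\Delta(\delta_\Delta^*)$ by $R_\Delta(\delta_{\mathrm{MAP}})$ via Bayes optimality, decompose the MAP regret risk over pairs $i\neq j$, use the containment $\{\delta_{\mathrm{MAP}}(D)=i\}\subseteq\{\rho_iP_i(D)\ge\rho_jP_j(D)\}$, and apply the pairwise Chernoff step from the proof of Theorem~\ref{thrm:multiclass_Chernoff_bound} with $s$ optimized separately for each pair. Your remarks that ties still land in $A_{ij}$ and that $\Delta_{ij}\ge 0$ lets the infimum pass through the weights fill in details the paper leaves implicit.
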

\vspace{-0.5cm}
\begin{IEEEproof} 
The information-regret Bayes rule $\delta_\Delta^*$ minimizes $R_\Delta(\delta) = \mathbb{E} \left[ \Delta_{\delta(D), B} \right]$. Therefore, $R_\Delta(\delta_\Delta^*) \le R_\Delta(\delta_{\mathrm{MAP}})$. For the MAP rule,
\begin{equation*}
R_\Delta(\delta_{\mathrm{MAP}}) = \sum_{j=1}^L \rho_j \sum_{\substack{i=1 \\ i \neq j}}^L \Delta_{ij} \mathbb{P}_j \{ \delta_{\mathrm{MAP}}(D) = i \}.
\end{equation*}
The event $\{ \delta_{\mathrm{MAP}}(D) = i \}$ is contained in $\{ \rho_i P_i(D) \ge \rho_j P_j(D) \}$.  Using the same Chernoff argument, we get
\begin{equation*}
\rho_j \mathbb{P}_j \{ \delta_{\mathrm{MAP}}(D) = i \} \le \inf_{0 \le s \le 1} \rho_j^{1-s} \rho_i^s \prod_{m=1}^M c_{m,ij}(s).
\end{equation*}
Consequently,
\begin{equation*}
R_\Delta(\delta_\Delta^*) \le \sum_{j=1}^L \sum_{\substack{i=1 \\ i \neq j}}^L \Delta_{ij} \inf_{0 \le s \le 1} \rho_j^{1-s} \rho_i^s \prod_{m=1}^M c_{m,ij}(s).
\vspace{-0.5cm}
\end{equation*}
\end{IEEEproof} 
\thrmref{thrm:CB_info_regret_fusion} identifies the difficult pairs of candidate experiments, i.e., it shows which pairs of experiments are most likely to cause fusion errors with substantial information loss.  Essentially, it shows that the regret bound is dominated by pairs of experiments for which selecting one instead of the other causes large information loss and the local recommendation channels provide weak separation.  Pairs $(i, j)$ with large regret $\Delta_{ij}$ and large Chernoff coefficient $c_{m,ij}(s)$ dominate the upper bound.  Hence, the distributed design problem is governed not merely by the number of local votes, but by the pairwise distinguishability of candidate experiments under the local recommendation channels.

A consequence of \thrmref{thrm:CB_info_regret_fusion} is that it governs exponential decay under pairwise separation.  Suppose that for every pair $i \neq j$, there exists $s_{ij} \in [0, 1]$ and constants $\kappa_{m,ij} \ge 0$ such that $c_{m,ij}(s_{ij}) \le e^{-\kappa_{m,ij}}$.  Then, 
\begin{equation*}
\prod_{m=1}^M c_{m,ij}(s_{ij}) \le \exp \left\{ -\sum_{m=1}^M \kappa_{m,ij} \right\}.
\end{equation*}
Therefore, we get
\begin{equation*}
R_\Delta(\delta_\Delta^*) \le \sum_{j=1}^L \sum_{\substack{i=1 \\ i \neq j}}^L \Delta_{ij} \rho_j^{1-s_{ij}} \rho_i^{s_{ij}} \exp \left\{ -\sum_{m=1}^M \kappa_{m,ij} \right\}.
\end{equation*}
In particular, if $\sum_{m=1}^M \kappa_{m,ij} \to \infty$ for every $i \neq j$, then $R_\Delta(\delta_\Delta^*) \to 0$, provided the regret values $\Delta_{ij}$ remain bounded.  Thus, the Bayes-fused design approaches the centralized oracle design when the local recommendation collectively separate every pair of oracle labels.

This can also be interpreted as an additive design-separation criterion.  The site-level pairwise information contribution is $-\log c_{m,ij}(s)$.  The aggregate pairwise separation between oracle labels $i$ and $j$ is $\sum_{m=1}^M \left\{ -\log c_{m,ij}(s) \right\}$.  Thus, the $L$-candidate design problem naturally leads to the criterion: \emph{separate all consequential pairs $(i, j)$.}  A pair is consequential when $\Delta_{ij}$ is large. A pair is statistically difficult when $c_{m,ij}(s)$ is close to $1$ for many sites. Therefore the most important pairs are those for which $\Delta_{ij} \prod_{m=1}^M c_{m,ij}(s)$ is large.  This gives a principled distributed design objective:
\begin{equation*}
\max \min_{\substack{i, j \\ i \neq j}} \left[ \sum_{m=1}^M -\log c_{m,ij}(s) \right],
\end{equation*}
or, in regret-weighted form,
\begin{equation*}
\min \sum_{j=1}^L \sum_{\substack{i=1 \\ i \neq j}}^L \Delta_{ij} \rho_j^{1-s} \rho_i^s \prod_{m=1}^M c_{m,ij}(s).
\end{equation*}
This shows that distributed experimental design with $L$ candidates is not a voting problem. It is a regret-weighted multi-class information-fusion problem.

\begin{remark}
MAP fusion can perform competitively, and in some settings may outperform the regret-weighted rule.  This is not inconsistent with the proposed framework.  MAP fusion targets oracle-label recovery, whereas regret-weighted fusion targets posterior expected information loss.  When the information-regret matrix is close to ordinary $0-1$ loss, or when the posterior distribution is concentrated on a single oracle label, the two objectives become nearly aligned.  In such cases, the simpler MAP rule can be as effective as, or slightly better than, the regret-weighted rule in finite samples, especially because the latter requires estimation of an additional regret matrix.  The advantage of the regret-weighted formulation is that it provides the correct decision criterion when different wrong experimental choices have meaningfully different information losses, and therefore, it uniformly dominates MAP in such a setting.  Numerical results in \secref{sec:numerical_results} substantiate this phenomenon by identifying two distinct operational regimes. 
\label{remark:MAP_regret-weighted}
\end{remark}

\begin{remark}
The majority vote rule considered here is related to classical voting procedures, but it should not be confused with a full Condorcet's method \cite{Wallis2014}, \cite[Chapter 2]{Gehrlein2011}.  In our formulation, each local site reports only a single recommended experiment, so the resulting rule is essentially a plurality-type aggregation of first-choice recommendations rather than a pairwise majority comparison based on complete rankings.  This distinction is important because the proposed Bayes fusion rule is not intended to satisfy voting-theoretic criteria; instead, it uses site-specific recommendation probabilities and information-regret weights to select the experiment with smallest posterior expected loss. 
\label{remark:Condorcet}
\end{remark}

\begin{remark}
The finite-class fusion model developed in this paper can also be viewed through the lens of Blackwell's comparison of experiments \cite{Blackwell1951,Blackwell1953}.  The vector of local recommendations $D = (D_1, \ldots, D_M)$ defines an experiment about the oracle label $B$, with transition probabilities determined by the recommendation channels $Q_m(a \mid j)$.  If one communication scheme is a garbling of another \cite[Theorem 1]{deOliveira2018}, then it is less informative in the Blackwell sense and cannot improve Bayes risk uniformly over decision problems.  Our analysis does not seek a complete Blackwell ordering of distributed recommendation schemes.  Instead, it fixes the induced recommendation experiment and studies the Bayes rule associated with the information-regret loss matrix $\Delta$.
\label{remark:Blackwell}
\end{remark}

\section{Illustrative examples}\label{sec:illustrative_examples}
In this section, we introduce two representative experimental design models to illustrate the proposed regret-weighted Bayes fusion framework.  The Gaussian linear model provides a precision-based setting in which local and centralized information gains can be compared directly, while the binary-response model demonstrates the applicability of the method to nonlinear information-gain calculations involving discrete outcomes.  These examples demonstrate how local design recommendations induce site-specific recommendation, which can be combined with oracle-label probabilities and information-regret weights to obtain a global Bayes action. Thus, the fusion step is not an unweighted aggregation of local choices, but a posterior risk minimization procedure calibrated to the centralized expected information-gain criterion.

\subsection{Gaussian linear experimental design}\label{subsec:Gaussian_model}
In this setting, each candidate experiment contributes a site-specific precision increment for a scalar Gaussian parameter. Each site recommends the locally most informative experiment, whereas the centralized oracle selects the experiment with the largest aggregate EIG across sites.

Consider the scalar Gaussian model $\theta \sim \mathcal{N}(0, \tau^2)$. At site $m$, candidate experiment $\xi_\ell$ produces $Y_{m\ell} = a_{m\ell}\theta + \varepsilon_{m\ell}$, $\varepsilon_{m\ell} \sim \mathcal{N}(0, \sigma_{m\ell}^2)$, independently across sites conditional on $\theta$. Define the precision contribution $r_{m\ell} = a_{m\ell}^2/\sigma_{m\ell}^2$.  If experiment $\xi_\ell$ is implemented centrally across all sites, the total precision contribution is $R_\ell = \sum_{m=1}^M r_{m\ell}$. The centralized EIG is
\begin{equation}
U(\xi_\ell) = \frac{1}{2} \log \left( 1 + \tau^2 R_\ell \right) = \frac{1}{2} \log \left( 1 + \tau^2 \sum_{m=1}^M r_{m\ell} \right).
\label{eq:Gauss_EIG}
\end{equation}
The centralized oracle label is 
\begin{equation*}
B = \arg\max_{1 \le \ell \le L} \sum_{m=1}^M r_{m\ell}.
\end{equation*}
From \eqref{eq:Gauss_EIG}, it is clear that maximizing $U(\xi_\ell)$ is equivalent to maximizing $\sum_{m=1}^M r_{m\ell}$.  Site $m$'s local EIG is $U_m(\xi_\ell) = \frac{1}{2} \log(1 + \tau^2 r_{m\ell})$.  Thus, its local recommendation is $D_m = \arg\max_{1 \le \ell \le L} r_{m\ell}$.

This example shows the fundamental distinction between local and centralized design. Site $m$ chooses the design with largest local precision $r_{m\ell}$, whereas the centralized oracle chooses the design with largest aggregate precision $\sum_{m=1}^M r_{m\ell}$. Local agreement is not required for centralized optimality. A design may be globally optimal because it performs consistently well across many sites, even if it is not the top local choice at most individual sites.  The regret matrix is given by 
\begin{equation*}
\begin{split}
\Delta_{ij}
&= \mathbb{E}\Biggl[\frac{1}{2}\log\!\left(1+\tau^2\sum_{m=1}^M r_{mj}\right)
\\
&\qquad\qquad
-
\frac{1}{2}\log\!\left(1+\tau^2\sum_{m=1}^M r_{mi}\right)\Bigg| B = j\Biggr].
\end{split}
\end{equation*}
The Bayes information-regret fusion rule is, therefore,
\begin{equation*}
\delta_\Delta^*(d) = \arg\min_{1 \le i \le L} \sum_{j=1}^L \Delta_{ij} \rho_j \prod_{m=1}^M Q_m(d_m \mid j),
\end{equation*}
which uses the local recommendations $d_m$, the reliability matrices $Q_m$, and the centralized information-regret gaps $\Delta_{ij}$. It is generally different from choosing the experiment with the most local recommendations.

\subsection{Binary-response experimental design}\label{subsec:binary_model}
Now suppose site $m$, under experiment $\xi_\ell$, observes a binary response $Y_{m\ell} \in \{0, 1\}$, with $\mathbb{P}(Y_{m\ell} = 1 \mid \theta, \xi_\ell) = p_{m\ell}(\theta)$.
In a logistic model,
\begin{equation*}
p_{m\ell}(\theta) = \frac{\exp \eta_{m\ell}(\theta)}{1 + \exp \eta_{m\ell}(\theta)}.
\end{equation*}
Let
\begin{equation*}
\bar{p}_{m\ell} = \mathbb{E}_\theta [p_{m\ell}(\theta)] = \int p_{m\ell}(\theta) p(\theta) \, d\theta.
\end{equation*}
The prior predictive distribution of $Y_{m\ell}$ is Bernoulli with success probability $\bar{p}_{m\ell}$.  The local EIG is $U_m(\xi_\ell) = I(\theta; Y_{m\ell} \mid \xi_\ell)$. For a binary observation, $I(\theta; Y_{m\ell} \mid \xi_\ell) = h(\bar{p}_{m\ell}) - \mathbb{E}_\theta [h(p_{m\ell}(\theta))]$, where $h(x) = -x \log x - (1-x) \log (1-x)$ is the binary entropy function.  Thus, site $m$'s recommendation is given by 
\begin{equation*}
D_m = \arg\max_{1 \le \ell \le L} \left[ h(\bar{p}_{m\ell}) - \mathbb{E}_\theta \{ h(p_{m\ell}(\theta)) \} \right].
\end{equation*}

For the centralized experiment $\xi_\ell$, the full response vector is $Y_\ell = (Y_{1\ell}, \ldots, Y_{M\ell})$. Conditional on $\theta$, the components are independent. The centralized EIG is
\begin{align*}
U(\xi_\ell) &= I(\theta; Y_\ell \mid \xi_\ell) = H(Y_\ell \mid \xi_\ell) - \mathbb{E}_\theta \left[ \sum_{m=1}^M h(p_{m\ell}(\theta)) \right] \\
&= -\!\!\!\!\!\! \sum_{y \in \{0, 1\}^M} \!\!\!\!\!\!   p_\ell(y) \log p_\ell(y) - \!\!\! \int \left[ \sum_{m=1}^M h(p_{m\ell}(\theta)) \right] p(\theta) \, d\theta.
\end{align*}
The Bayes information-regret fusion rule is again
\begin{equation*}
\delta_\Delta^*(d) = \arg\min_{1 \le i \le L} \sum_{j=1}^L \Delta_{ij} \rho_j \prod_{m=1}^M Q_m(d_m \mid j).
\end{equation*}
The binary-response example shows that the same finite-$L$ fusion theory applies beyond Gaussian linear models. The local utilities and centralized utilities may be nonlinear, but the fusion rule depends only on the induced oracle prior $\rho_j$, the local recommendation matrices $Q_m(a \mid j)$, and the information-regret matrix $\Delta_{ij}$.

\section{Numerical results}\label{sec:numerical_results}
We now examine the comparative performance of three fusion rules\textemdash majority vote, MAP fusion, and regret-Bayes fusion\textemdash under two experimental setups: baseline and calibrated loss-sensitive.  These three fusion rules can agree in highly symmetric settings, but they generally differ when the sites are heterogeneous, the local recommendation mechanisms have unequal reliability, or the information loss from different wrong choices is not uniform.  The numerical studies are designed to make these distinctions visible.  In each experimental setting, the performance of each rule is evaluated using oracle-selection accuracy and average information regret.

For each setting, performance is evaluated by Monte Carlo simulation using independent training and testing protocols. In the Gaussian design experiment, for each fixed number of sites, $60{,}000$ training replicates and $60{,}000$ independent testing replicates were generated. In the binary-response experiment, for each fixed number of sites, $2{,}500$ training replicates and $2{,}500$ independent testing replicates were generated. Each training replicate produced a distributed design instance, the local site recommendations, and the corresponding centralized oracle experiment computed from the full-information EIG. The training replicates were used to estimate the oracle-label probabilities, the site-specific recommendation mechanisms, and the information-regret matrix. These estimated quantities were then held fixed and applied to the independent test replicates. On the test sample, majority vote, MAP fusion, and regret-weighted Bayes fusion were compared using oracle-selection accuracy and average information regret. In the Gaussian model, the local and centralized utilities were computed directly from the precision-based information-gain formula. In the binary-response model, the required expectations over the Gaussian prior were evaluated using Gauss--Hermite quadrature with 35 quadrature nodes; the quadrature nodes and weights were transformed to match the $\mathcal{N}(0, \tau^2)$ prior. The centralized binary-response information gain was computed by combining this quadrature with exact enumeration of all binary response patterns across the distributed sites.

For the Gaussian model, each simulation replicate represents one distributed design instance. In each replicate, the local precision contributions vary across sites and candidate experiments, generating both site heterogeneity and uncertainty about the centralized oracle design. The local recommendation from each site is obtained by selecting the experiment with the largest local precision contribution. The centralized utility of each candidate is computed from the total precision accumulated across sites, and the oracle design is the candidate with the largest centralized utility. A separate training sample is used to estimate the prior probability of each oracle label, the site-specific recommendation behavior, and the information-regret matrix. These estimated quantities are held fixed and used to evaluate the three fusion rules on an independent test sample. This train-test separation avoids evaluating the fusion rules on the same simulated instances used to estimate the recommendation channels.

For the binary-response model, the unknown parameter is assigned a Gaussian prior, and each candidate experiment corresponds to a binary-response design point.  At each site, the probability of success depends on the unknown parameter, the candidate design point, and a site-specific offset.  Thus, different sites may prefer different candidate experiments because their response curves are shifted relative to one another. Each site computes the local EIG for each candidate design point and recommends the locally best experiment. The centralized oracle computes the EIG of the full vector of binary responses across all sites and selects the globally best candidate.

The binary-response model is computationally more intensive than the Gaussian model because the centralized EIG depends on the joint predictive distribution of the binary response vector. For a fixed number of sites, this distribution is evaluated by enumerating all binary response patterns and integrating over the prior distribution of the unknown parameter using numerical quadrature.  As such, the binary-response study is run for a moderate range of site counts. This is not a limitation of the fusion theory; rather, it reflects the cost of exactly evaluating the centralized information gain in the model. The same train-test protocol is used as in the Gaussian experiment.  

\subsection{Baseline setup}\label{subsec:baseline}
In this setting, oracle-label accuracy and average information regret are aligned. 
\begin{figure*}[t]
    \centering
    \scriptsize
    \subfloat[Average regret vs. number of sites.]{%
        \includegraphics[width=0.48\textwidth]{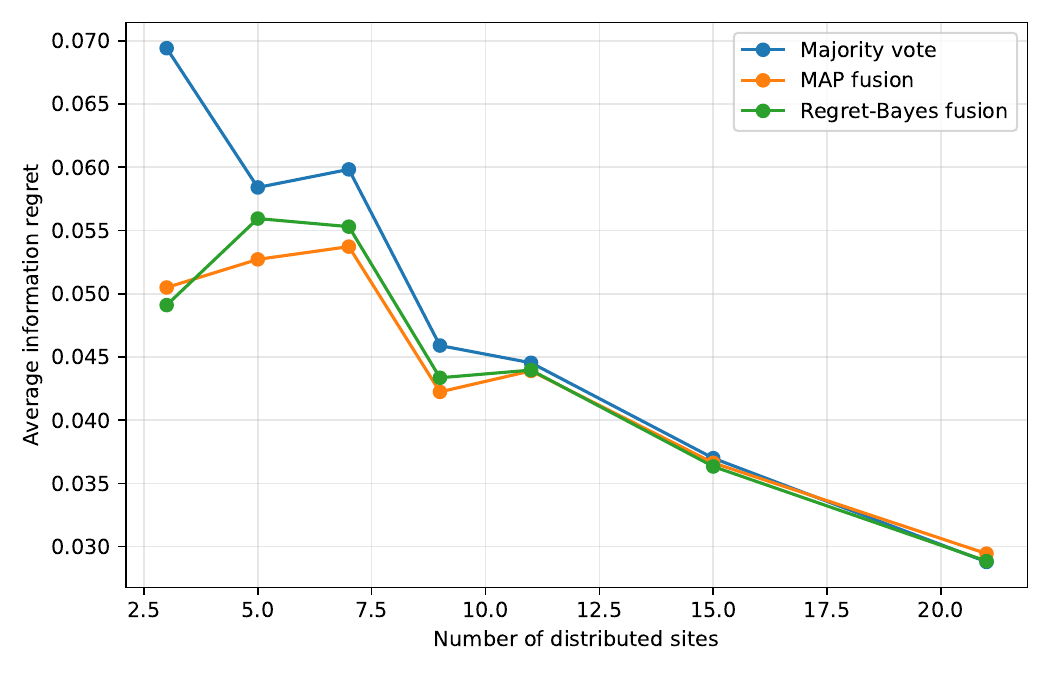}
        \label{fig:Gauss_average_regret_vs_sites}
    }
    \hfill
    \subfloat[Oracle-selection accuracy vs. number of sites.]{%
        \includegraphics[width=0.48\textwidth]{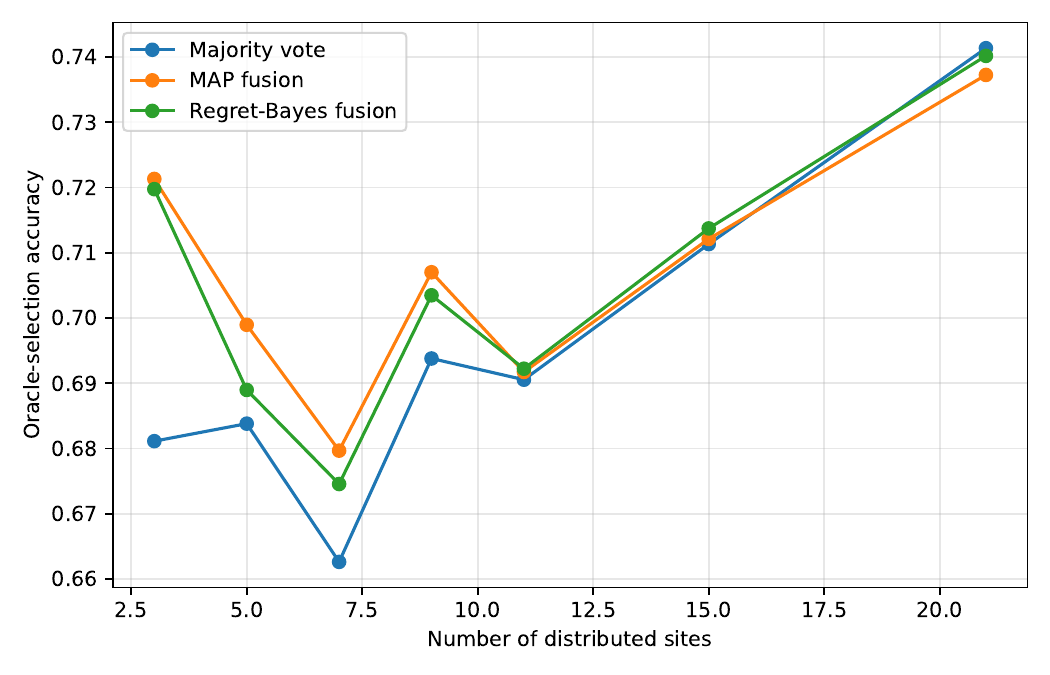}
        \label{fig:Gauss_oracle_accuracy_vs_sites}
    }
    \caption{Performance of the Gaussian model in the baseline setup.}
    \label{fig:one}
\end{figure*}
\figref{fig:Gauss_average_regret_vs_sites} shows the average information regret versus the number of distributed sites for the Gaussian model.  A smaller value indicates that the selected experiment is closer to the centralized oracle in terms of EIG.  Majority vote provides the baseline behavior obtained by na\"{i}ve aggregation.  The MAP curve shows the effect of reliability-weighted probabilistic fusion based on the estimated local recommendation channels, while the regret-Bayes curve shows the effect of additionally incorporating the information-regret matrix.  When regret-Bayes fusion attains the smallest average regret, the result indicates that, in this setting, directly minimizing posterior expected information loss is better aligned with the experimental-design objective than either counting local votes or maximizing the posterior probability of the oracle label.

\figref{fig:Gauss_oracle_accuracy_vs_sites} shows the oracle-selection accuracy versus the number of distributed sites for the Gaussian model.  MAP fusion is expected to perform strongly on this metric because it is specifically designed to maximize posterior oracle-label probability. Regret-Bayes fusion may sometimes have slightly lower oracle-selection accuracy than MAP fusion, but this should not be interpreted as a weakness if it simultaneously achieves lower average information regret. Such a pattern would show that the regret-based rule may sacrifice exact oracle-label recovery in some cases in order to avoid decisions with larger information loss.  Comparing \figref{fig:Gauss_average_regret_vs_sites} and \figref{fig:Gauss_oracle_accuracy_vs_sites} shows that accuracy and regret measure related but distinct objectives, and the proposed method is designed for the latter.

The Gaussian model shows that, when local sites are heterogeneous, reliability-weighted and regret-weighted fusion can improve the fused design by increasing oracle-selection accuracy and reducing average information regret relative to na\"{i}ve voting.  Majority vote treats a recommendation from every site as equally informative. This is appropriate only under strong symmetry assumptions. When those assumptions fail, majority vote can be pulled toward locally popular designs that are not globally best, or toward designs whose mistakes carry larger information cost.  MAP fusion corrects part of this problem by learning how each site's recommendation relates to the centralized oracle. Regret-Bayes fusion corrects the remaining part by incorporating the severity of selecting each candidate when another candidate is oracle-optimal.

\begin{figure*}[t]
    \centering
    \scriptsize
    \subfloat[Average regret vs. number of sites.]{%
        \includegraphics[width=0.48\textwidth]{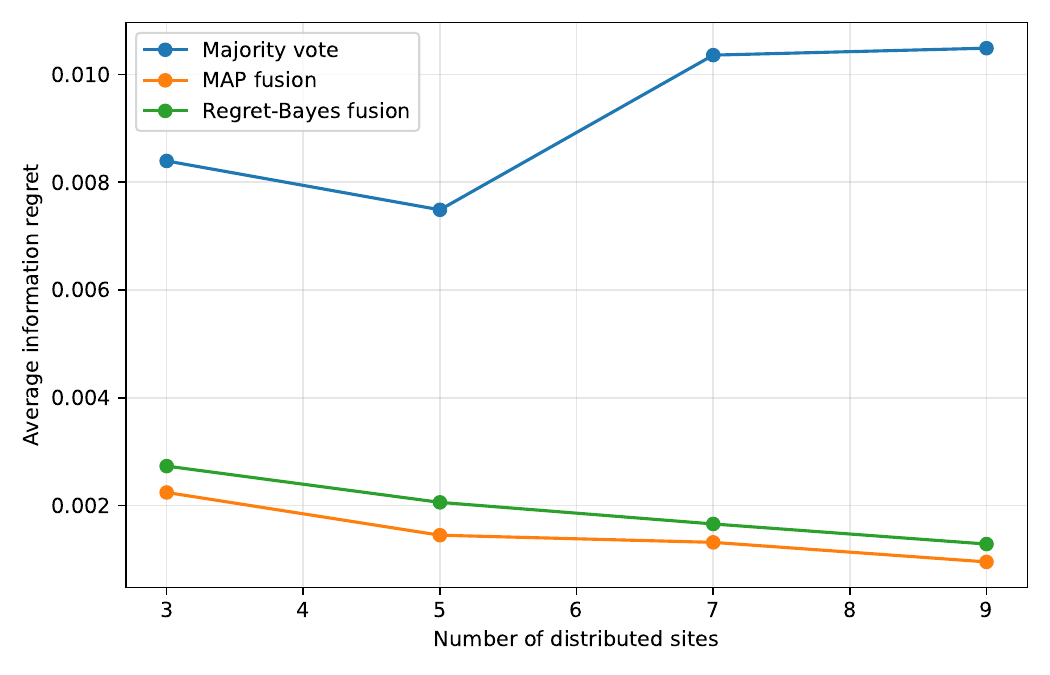}
        \label{fig:binary_response_regret_vs_sites}
    }
    \hfill
    \subfloat[Oracle-selection accuracy vs. number of sites.]{%
        \includegraphics[width=0.48\textwidth]{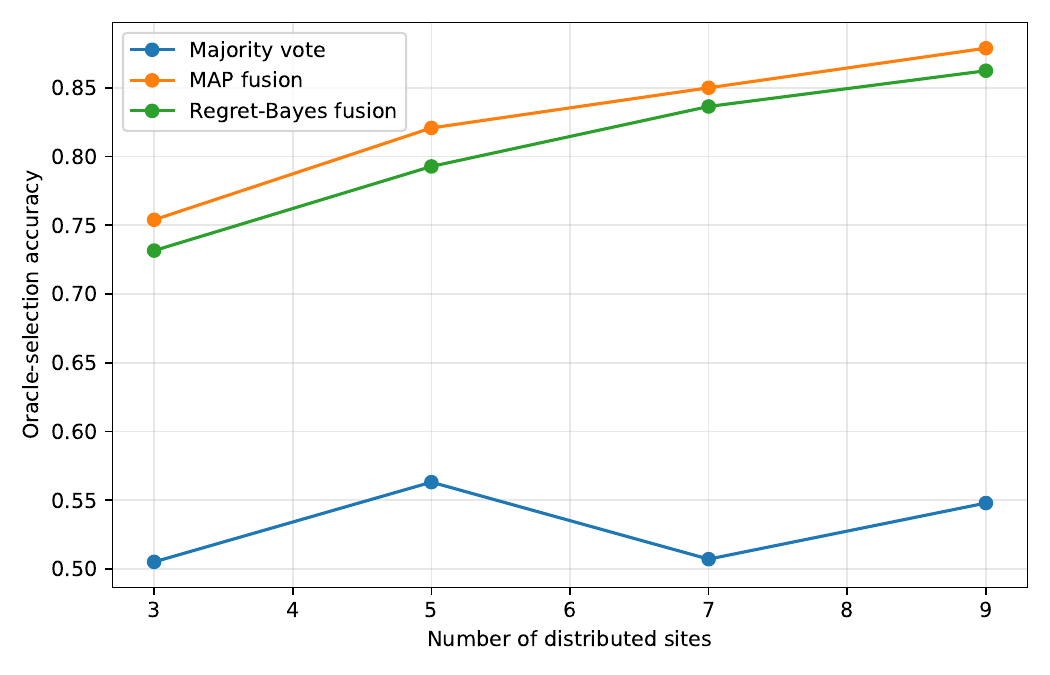}
        \label{fig:binary_response_accuracy_vs_sites}
    }
    \caption{Performance of the binary-response model in the baseline setup.}
    \label{fig:one}
\end{figure*}
\figref{fig:binary_response_regret_vs_sites} shows the average information regret versus the number of distributed sites for the binary-response model.  The expected qualitative pattern is that regret-Bayes fusion should be most competitive on average regret, because it is the only method among the three that explicitly uses the regret matrix. MAP fusion may improve on majority vote by accounting for site reliability, but it still treats all oracle-label errors according to a classification objective. Majority vote is the least adaptive because it ignores both site reliability and the unequal consequences of different wrong design choices.  Therefore, when the regret curve for regret-Bayes fusion lies below the other curves, this is direct evidence that the proposed decision rule is better aligned with the goal of preserving centralized information gain.

\figref{fig:binary_response_accuracy_vs_sites} shows the oracle-selection accuracy versus the number of distributed sites for the binary-response model.  If MAP fusion has the highest oracle-selection accuracy but regret-Bayes fusion has the lowest average regret, the results support the central claim of the paper: exact recovery of the oracle label and minimization of information loss are not the same. In experimental design, a rule that occasionally selects a non-oracle design with nearly identical information gain may be preferable to a rule that achieves slightly higher oracle-label accuracy but sometimes makes more costly mistakes. The binary-response model is particularly useful for demonstrating this distinction because the information landscape across candidate design points can be relatively flat in some regions and sharply separated in others.

The binary-response model shows that the proposed method is meaningful even when the information structure is nonlinear. In binary-response design, the utility of a candidate experiment is shaped by the prior, the response probability curve, the candidate design point, and the site-specific offset. Local recommendations can therefore disagree for scientifically meaningful reasons. A frequency-based rule cannot distinguish between disagreement caused by noise and disagreement caused by heterogeneous information geometry. The Bayes fusion rules address this by estimating the relationship between local recommendations and centralized oracle choices. The regret-weighted rule further accounts for how costly each possible disagreement is in terms of information gain.

\subsection{Calibrated loss-sensitive setup}\label{subsec:loss-sensitive}
The preceding numerical study showed that when the information-regret structure is closely aligned with oracle-label accuracy, MAP fusion can be superior.   We now conduct a calibrated experiment to examine a more loss-sensitive regime, wherein some oracle labels are less frequent but much more costly to miss.  Here, we consider $L = 4$ candidate experiments and vary the number of distributed sites $M$. For each $M$, a training sample was used to estimate $\rho_j$, $Q_m(a \mid j)$, and $\Delta_{ij}$, and an independent test sample was used to evaluate majority vote, MAP fusion, and regret-weighted Bayes fusion.  The model was calibrated so that some candidate experiments were difficult to distinguish from local recommendations, while certain wrong selections incurred much larger information regret. This calibration creates a setting in which MAP can remain effective for oracle-label recovery, but need not minimize average information loss.  In both the Gaussian and binary-response models, the local recommendation channels were chosen so that two candidate experiments are difficult to distinguish from local recommendations alone, while the information-regret matrix assigns substantially larger loss to one direction of confusion. This creates a stress-test in which MAP continues to target the most probable oracle label, whereas regret-weighted Bayes fusion explicitly accounts for the asymmetric information loss associated with different wrong experimental choices.

\begin{figure*}[t]
    \centering
    \scriptsize
    \subfloat[Average regret vs. number of sites.]{%
        \includegraphics[width=0.48\textwidth]{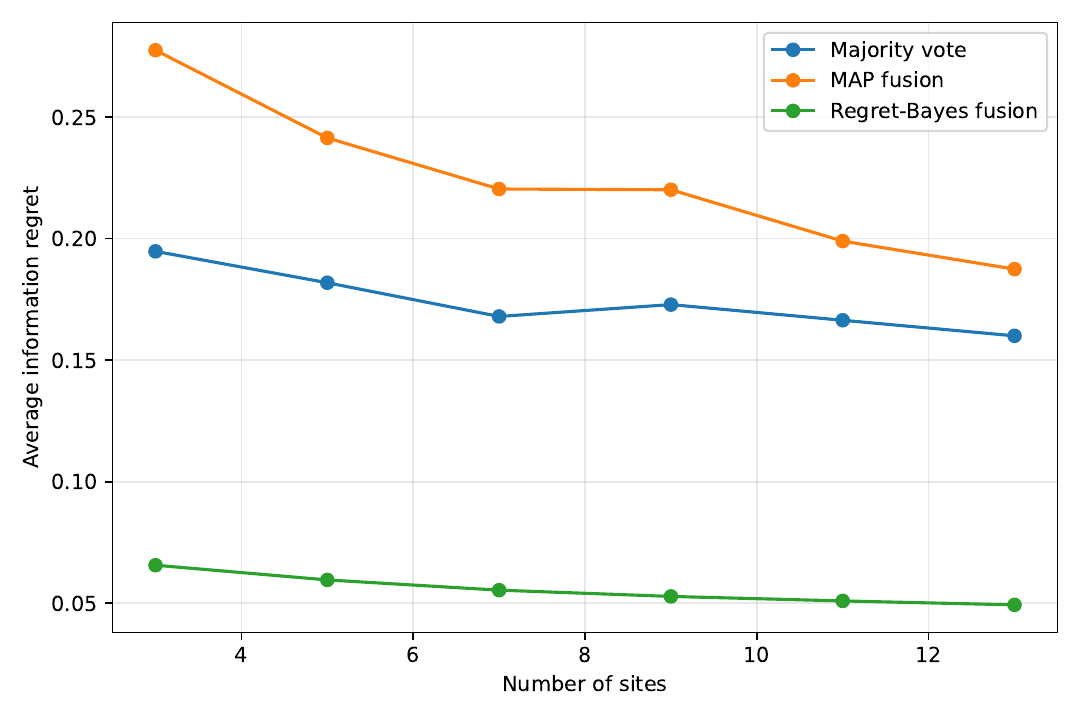}
        \label{fig:Gaussian_average_regret_stress}
    }
    \hfill
    \subfloat[Oracle-selection accuracy vs. number of sites.]{%
        \includegraphics[width=0.48\textwidth]{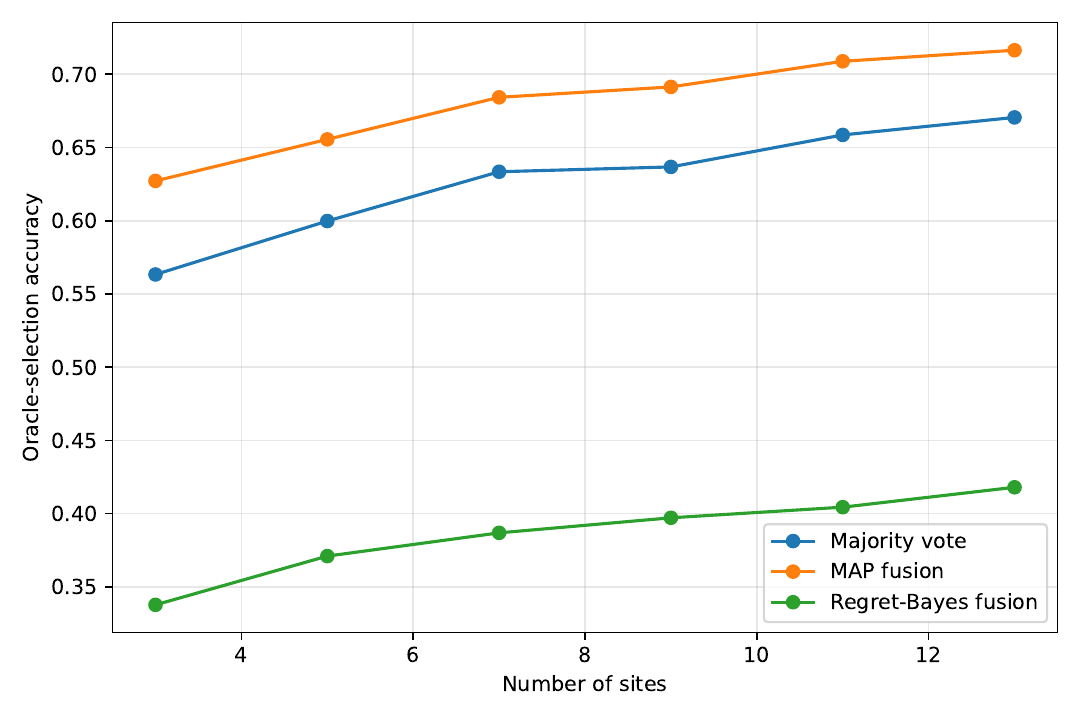}
        \label{fig:Gaussian_oracle_accuracy_stress}
    }
    \caption{\centering Performance of the Gaussian model in the calibrated loss-sensitive setup.}
    \label{fig:one}
\end{figure*}
For the Gaussian model, the average-regret plot in \figref{fig:Gaussian_average_regret_stress} shows a clear separation between the three fusion rules. MAP fusion, although probabilistically well-calibrated for oracle-label recovery, has the largest average information regret across all values of $M$. Majority vote improves over MAP in regret because it is less strongly biased toward the most probable oracle label, but it still ignores the estimated regret structure. Regret-weighted Bayes fusion achieves the smallest average regret uniformly over the displayed range of sites.  It can be inferred that, when wrong choices have unequal information consequences, minimizing posterior expected regret can preserve centralized information gain more effectively than maximizing the posterior probability of the oracle label.

The Gaussian oracle-selection accuracy plot in \figref{fig:Gaussian_oracle_accuracy_stress} shows the complementary behavior. MAP fusion has the highest oracle-selection accuracy for every value of $M$, followed by majority vote, while regret-weighted Bayes fusion has lower oracle-label accuracy. This is not a contradiction. In this experiment, the regret-weighted rule deliberately sacrifices some probability of selecting the exact oracle label in order to avoid high-regret errors. Thus, the Gaussian results separate the two performance criteria: MAP is preferable for label recovery, whereas regret-weighted Bayes fusion is preferable for minimizing information loss.

\begin{figure*}[htbp!]
    \centering
    \scriptsize
    \subfloat[Average regret vs. number of sites.]{%
        \includegraphics[width=0.48\textwidth]{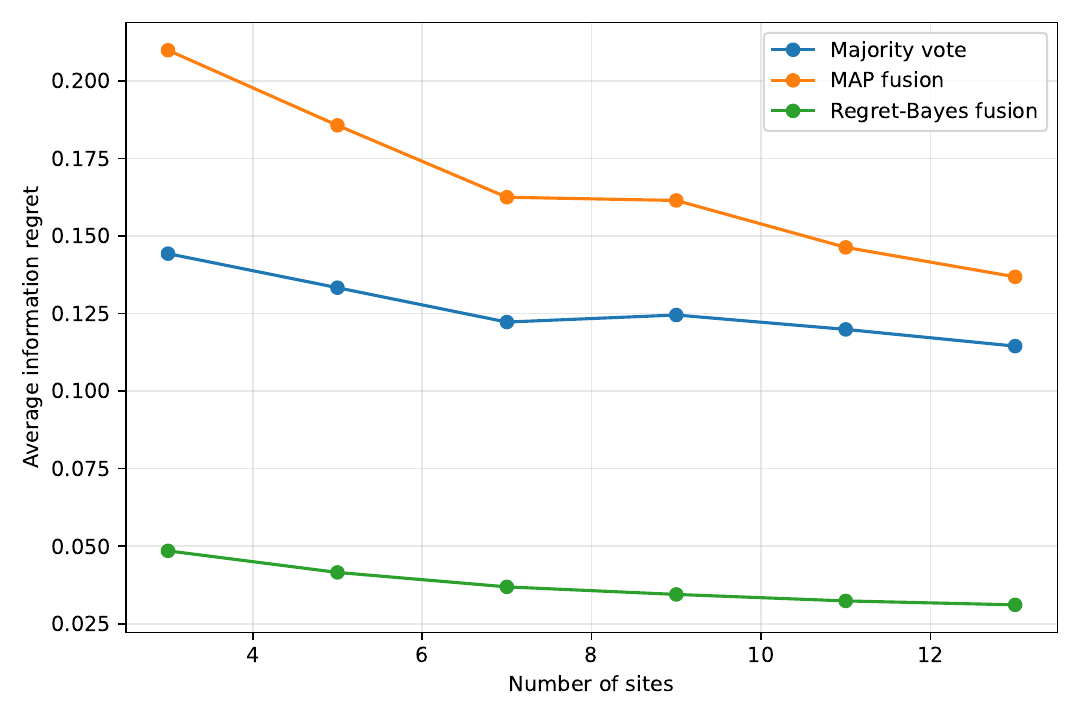}
        \label{fig:binary_average_regret_stress}
    }
    \hfill
    \subfloat[Oracle-selection accuracy vs. number of sites.]{%
        \includegraphics[width=0.48\textwidth]{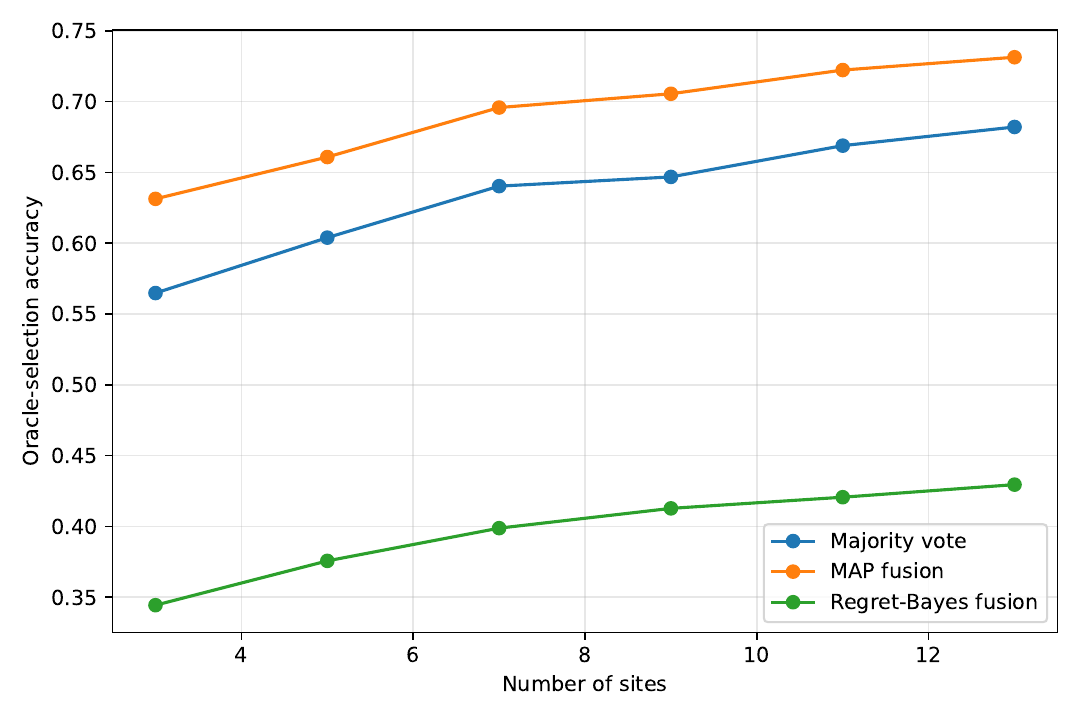}
        \label{fig:binary_oracle_accuracy_stress}
    }
    \caption{\centering Performance of the binary-response model in the calibrated loss-sensitive setup.}
    \label{fig:one}
\end{figure*}
The binary-response average-regret plot in \figref{fig:binary_average_regret_stress} exhibits the same qualitative conclusion in a nonlinear information-gain setting.  MAP fusion again has the largest average information regret, majority vote is intermediate, and regret-weighted Bayes fusion gives the lowest regret for all numbers of sites. The persistence of this ordering in the binary-response model is important because the utilities are no longer simple precision-based quantities; they arise from EIG under a discrete response model. Hence, the advantage of regret-weighted fusion is not an artifact of the Gaussian construction, but reflects the role of the regret matrix in the fusion decision.  The binary-response oracle selection accuracy in \figref{fig:binary_oracle_accuracy_stress} confirms that the oracle-label accuracy of MAP increases with the number of sites and remains higher than both majority vote and regret-weighted Bayes fusion. However, this higher label accuracy does not translate into lower average regret because the costly confusions are not treated differently by MAP.  

The numerical results substantiate the claim that multi-candidate distributed experimental design should not be treated as a voting problem, which discards site reliability, pairwise confusability, and the information loss caused by different mistakes. The regret-Bayes rule uses these quantities through the oracle-label probabilities, local recommendation channels, and information-regret matrix.  Secondly, oracle-selection accuracy alone is not sufficient for evaluating a distributed design rule.  Accuracy is useful, but it is a classification metric.  In experimental design, the more important question is how much information is lost by the selected experiment relative to the centralized oracle, so posterior expected information regret is the relevant criterion.  A method with the highest accuracy need not have the smallest regret, and a method with slightly lower accuracy may be preferable if its errors are less costly.  This distinction is precisely why the multi-candidate setting is richer than the binary case and why a regret-weighted formulation is needed.  The Gaussian and binary-response examples show that the full information-regret structure of a multi-class decision problem changes the behavior of the fusion rule, the interpretation of numerical performance, and the practical recommendation for distributed experimental design.

\section{Conclusion}\label{sec:conclusion}
We developed a regret-weighted Bayes fusion framework for distributed experimental design with multiple candidate experiments.  The proposed rule combines local site recommendations by accounting for site-specific reliability, prior plausibility of the centralized oracle design, and the information loss incurred by selecting a suboptimal experiment.  This extends the earlier two-experiment distributed experimental design formulation \cite{Nagananda2026c} from a binary threshold rule to a multi-class Bayes decision problem with a full information-regret structure. The theoretical results show that majority vote is optimal only under restrictive symmetry assumptions, while the numerical studies show that the preferred fusion rule depends on the relation between oracle-label accuracy and information regret.  When these criteria are aligned, MAP fusion is effective because recovering the most probable oracle label also preserves information gain.  When the regret matrix is asymmetric or highly uneven, however, this alignment can fail: a less probable oracle label may correspond to a much larger loss if missed.  In such loss-sensitive settings, regret-weighted Bayes fusion is preferable because it uses the full posterior risk rather than the posterior mode alone.

Future work may consider sequential distributed design, where sites update their recommendations over multiple rounds as new data are collected. Another useful direction is to allow richer messages from local sites, such as ranked candidate lists, local utility scores, or uncertainty summaries, instead of a single recommended experiment. Robust fusion under misspecified recommendation channels, dependent sites, privacy constraints, and communication limits also deserves further study.


\balance
\bibliographystyle{IEEEtran}
\bibliography{/Users/kgnagananda/Documents/Work/collaborations/pdx/research/references/research_pdx.bib}

\vfill

\end{document}